\numberwithin{equation}{section}
\newtheorem{thm}{\textbf{Theorem}}[section]
\newtheorem{prop}[thm]{\textbf{Proposition}}
\newtheorem{lemma}[thm]{\textbf{Lemma}}
\theoremstyle{definition}
\newtheorem{example}[thm]{\textbf{Example}}
\theoremstyle{definition}
\newtheorem{defi}[thm]{\textbf{Definition}}
\theoremstyle{definition}
\newtheorem{remark}[thm]{\textbf{Remark}}
\theoremstyle{definition}
\theoremstyle{definition}
\newtheorem{question}[thm]{\textbf{Question}}
\newcommand{\bp}{\mathcal{B}}
\newcommand{\lpw}{L_p(\Omega)}
\newcommand{\cbp}{\mathcal{CB}_p}
\newcommand{\llangle}{\langle\!\langle}
\newcommand{\rrangle}{\rangle\!\rangle}
\newcommand{\supp}{\operatorname{supp}}
\def\proclaim #1. #2\par{\medbreak
\noindent{\bf#1.\enspace}{\sl#2}\par\medbreak}
\begin{document}

%============================
%  TITLE AND AUTHOR(s)
%============================
\title[Hahn-Banach Type Extension Theorems on $p$-Operator Spaces]
{Hahn-Banach Type Extension Theorems on $p$-Operator Spaces}
\author{Jung-Jin Lee}
\address{Department of Mathematics and Statistics\\
Mount Holyoke College, South Hadley, MA 01075, USA} \email[Jung-Jin
Lee]{jjlee@mtholyoke.edu}
%\author{Zhong-Jin Ruan}
%\address{Department of Mathematics\\
%University of Illinois, Urbana, IL 61801, USA} \email[Zhong-Jin
%Ruan]{ruan@math.uiuc.edu}
%\thanks{${}^*$ The first author was supported
%by the Innovation Foundation of Nankai University
%The Project Sponsored by the Scientific Research Foundation for
%the Returned Overseas Chinese Scholars, State Education Ministry.
%by the Project Sponsored by SRF for ROCS, SEM,
%and the  third author was partially supported by the National Science
%Foundation DMS-0901395. }
\thanks{The author was supported by Hutchcroft Fund, Department of Mathematics and Statistics, Mount Holyoke College}

%============================
%  GENERAL INFORMATION
%============================
\subjclass[2000]{47L25}

\date{\today}

%============================
%  ABSTRACT
%============================

\begin{abstract} Let $V\subseteq W$ be two operator spaces. Arveson-Wittstock-Hahn-Banach theorem asserts that every completely contractive map $\varphi:V\to \mathcal{B}(H)$ has a completely contractive extension $\tilde{\varphi}:W\to \mathcal{B}(H)$, where $\mathcal{B}(H)$ denotes the space of all bounded operators from a Hilbert space $H$ to itself. In this paper, we show that this is not in general true for $p$-operator spaces, that is, we show that there are $p$-operator spaces $V\subseteq W$, an $SQ_p$ space $E$, and a $p$-completely contractive map $\varphi:V\to \mathcal{B}(E)$ such that  $\varphi$ does not extend to a $p$-completely contractive map on $W$. Restricting $E$ to $L_p$ spaces, we also consider a condition on $W$ under which every completely contractive map $\varphi:V\to \mathcal{B}(L_p(\mu))$ has a completely contractive extension $\tilde{\varphi}:W\to \mathcal{B}(L_p(\mu))$.
\end{abstract}

\maketitle

\section{Introduction to $p$-Operator Spaces}
Throughout this paper, we assume $1< p,p' < \infty$ with $1/p+1/p'=1$, unless stated otherwise. For a Banach space $X$, we denote by $\mathbb{M}_{m,n}(X)$ the linear space of all $m\times n$ matrices with entries in $X$. By $\mathbb{M}_n(X)$, we will denote $\mathbb{M}_{n,n}(X)$. When $X=\mathbb{C}$, we will simply use $\mathbb{M}_{m,n}$ (respectively, $\mathbb{M}_n$) for $\mathbb{M}_{m,n}(\mathbb{C})$ (respectively, $\mathbb{M}_{n}(\mathbb{C})$). For Banach spaces $X$ and $Y$, we will denote by $\mathcal{B}(X,Y)$ the space of all bounded linear operators from $X$ to $Y$. We will also use $\mathcal{B}(X)$ for $\mathcal{B}(X,X)$. The $\ell_p$ direct sum of $n$ copies of $X$ will be denoted by $\ell_p^n(X)$.

\begin{defi} Let $SQ_p$ denote the collection of subspaces of quotients of $L_p$ spaces. A Banach space $X$ is called a \textit{concrete
$p$-operator space} if $X$ is a closed subspace of $\mathcal{B}(E)$ for some $E\in SQ_p$.
\end{defi}

Let $E\in SQ_p$. For a concrete $p$-operator space $X\subseteq\mathcal{B}(E)$ and for each $n\in \mathbb{N}$, define a norm $\|\cdot\|_n$ on
$\mathbb{M}_n(X)$ by identifying $\mathbb{M}_n(X)$ as a subspace of $\mathcal{B}(\ell_p^n(E))$, and let $M_n(X)$ denote the corresponding normed space. The norms $\|\cdot\|_n$ then satisfy

\begin{description}
\item[$\mathcal{D}_\infty$] for $u \in M_n(X)$ and $v \in M_m(X)$, we have $\|u\oplus v\|_{M_{n+m}(X)}=\max\{\|u\|_n,\|v\|_m\}$.
\item[$\mathcal{M}_p$] for $u \in M_m(X)$, $\alpha \in \mathbb{M}_{n,m}$, and $\beta \in \mathbb{M}_{m,n}$, we have
$\|\alpha u \beta\|_n\leq \|\alpha\|\|u\|_m\|\beta\|$, where $\|\alpha\|$ is the norm of $\alpha$ as a member of $\mathcal{B}(\ell_p^m,\ell_p^n)$, and similarly for $\beta$.
\end{description}

When $p=2$, these are Ruan's axioms and $2$-operator spaces are simply operator spaces because the $SQ_2$ spaces are exactly the same as Hilbert spaces.

As in operator spaces, we can also define abstract $p$-operator spaces.

\begin{defi}
An \textit{abstract $p$-operator space} is a Banach space $X$ together with a sequence of norms $\|\cdot\|_n$ defined on $\mathbb{M}_n(X)$ satisfying the conditions $\mathcal{D}_\infty$ and $\mathcal{M}_p$ above.
\end{defi}

Thanks to Ruan's representation theorem\cite{Ruan}, we do not distinguish between concrete and abstract operator spaces. Le Merdy showed that this remains true for $p$-operator spaces.

\begin{thm}\cite[Theorem 4.1]{LeMerdy} \label{LeMerdy characterization} An abstract $p$-operator space $X$ can be isometrically embedded in $\mathcal{B}(E)$ for some $E\in SQ_p$ in such a way that the canonical norms on $\mathbb{M}_n(X)$ arising from this embedding agree with the given norms.
\end{thm}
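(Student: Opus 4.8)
The plan is to prove a Ruan‑type representation theorem by the classical two–step scheme: reduce to one matrix level at a time using $\ell_p$‑direct sums, and then, for each fixed matrix, manufacture a $p$‑completely contractive map into $\mathcal{B}(E)$ that almost attains its norm.

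\emph{Step 1: reduction to a single matrix.} The first thing to check is that $SQ_p$ is stable under arbitrary $\ell_p$‑direct sums. If $E_i = Y_i/N_i$ with $Y_i \subseteq L_p(\mu_i)$, then the $\ell_p$‑sum $\big(\bigoplus_i E_i\big)_{\ell_p}$ is a subspace of $\big(\bigoplus_i L_p(\mu_i)/N_i\big)_{\ell_p}$, and the latter is a quotient of $L_p\big(\bigsqcup_i \mu_i\big) = \big(\bigoplus_i L_p(\mu_i)\big)_{\ell_p}$ by $\big(\bigoplus_i N_i\big)_{\ell_p}$ (the lifting estimate for quotient norms carries over to $\ell_p$‑sums); hence $\big(\bigoplus_i E_i\big)_{\ell_p}\in SQ_p$. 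Next, given a family of $p$‑complete contractions $\varphi_i:X\to\mathcal{B}(E_i)$, the block‑diagonal map $\varphi=\bigoplus_i\varphi_i$ is a $p$‑complete contraction into $\mathcal{B}\big(\big(\bigoplus_i E_i\big)_{\ell_p}\big)$, and since $\ell_p^n\big(\big(\bigoplus_i E_i\big)_{\ell_p}\big)=\big(\bigoplus_i\ell_p^n(E_i)\big)_{\ell_p}$ and a block‑diagonal operator on an $\ell_p$‑sum has norm the supremum of its block norms, we get $\|\varphi_n(u)\|=\sup_i\|(\varphi_i)_n(u)\|$ at every level. Therefore it suffices to show: for every $n$, every $u\in\mathbb{M}_n(X)$ and every $\varepsilon>0$ there are $E\in SQ_p$ and a $p$‑complete contraction $\varphi:X\to\mathcal{B}(E)$ with $\|\varphi_n(u)\|>\|u\|_{M_n(X)}-\varepsilon$. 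Indexing by all such triples and taking the $\ell_p$‑sum of the resulting maps then produces one $E\in SQ_p$ and an embedding $X\hookrightarrow\mathcal{B}(E)$ which is contractive at each matrix level (each summand is) and, by the approximate norm attainment, isometric at each matrix level, i.e.\ completely isometric.

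\emph{Step 2: the single‑matrix representation.} Fix $u\in\mathbb{M}_n(X)$ with $\|u\|_{M_n(X)}=1$. Hahn--Banach gives a functional $\rho$ in the unit ball of $M_n(X)^*$ with $\rho(u)=1$. The job is to convert the data of $\rho$ into an $SQ_p$ space $E$ together with a $p$‑complete contraction $\varphi:X\to\mathcal{B}(E)$ for which $\|\varphi_n(u)\|$ is close to $1$ --- the $p$‑operator space analogue of a GNS construction. Axiom $\mathcal{M}_p$ is exactly the compatibility between matrix amplifications and multiplication by rectangular scalar matrices, measured in the $\mathcal{B}(\ell_p^m,\ell_p^n)$ norm, that is needed for the abstract matrix norms to transfer to genuine operator norms on $\mathcal{B}(\ell_p^n(E))$, while $\mathcal{D}_\infty$ governs the passage to and gluing of finite pieces. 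Concretely, I would build $E$ as a subspace of a quotient of a concrete $L_p(\mu)$ (e.g.\ via an ultraproduct of finite‑dimensional $\ell_p$‑spaces, or as an explicit function space) extracted from $\rho$, equipped with ``column'' and ``row'' vectors implementing $\rho$, and define $\varphi$ by the corresponding rank‑one‑type formula, verifying $p$‑complete contractivity from $\mathcal{M}_p$.

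\emph{The main obstacle} is entirely in Step 2: without an inner product, orthogonal projections, or positivity, the usual GNS argument has no literal analogue. The substitute is a factorization theorem for $p$‑completely bounded maps into $\mathcal{B}(E)$ with $E\in SQ_p$ --- such maps factor through $\ell_p$‑type sums and multiplication operators --- and one has to show that the Hahn--Banach functional $\rho$ admits precisely such a factorization, which is what allows $E$ and $\varphi$ to be manufactured. I would therefore invoke (or reprove) the factorization machinery for $p$‑completely bounded multilinear maps; with that decomposition of $\rho$ in hand, the remaining verifications --- $p$‑complete contractivity of $\varphi$, and then in Step 1 that the assembled embedding is a complete isometry --- are routine.
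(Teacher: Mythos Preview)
The paper does not prove this theorem; it is quoted verbatim from Le Merdy's work and used as a black box. So there is no ``paper's own proof'' to compare against.

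That said, your outline is a faithful sketch of how Le Merdy's argument actually goes. Step~1 (reduction via $\ell_p$-direct sums to almost norming a single matrix) is standard and correct as you wrote it; the closure of $SQ_p$ under $\ell_p$-sums is exactly the Herz result the paper cites. Step~2 is, as you candidly say, where all the content lies, and you have correctly identified the right tool: Le Merdy's proof rests on Pisier's factorization theorem for $p$-completely bounded maps (every $p$-cb map into $\mathcal{B}(\ell_p)$ factors as $a\cdot\pi(\,\cdot\,)b$ with $\pi$ a representation on some $SQ_p$ space), applied to the functional $\rho\in M_n(X)^*$ viewed as a $p$-cb map from $X$ into $M_n=\mathcal{B}(\ell_p^n)$. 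What you have written in Step~2 is a description of the target rather than a proof: ``invoke (or reprove) the factorization machinery'' is precisely the nontrivial theorem, and nothing you have written gets one any closer to proving it. If the intent is to cite that factorization theorem, your sketch is complete and matches Le Merdy's route; if the intent is to give a self-contained proof, Step~2 remains essentially empty.
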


\begin{example} \label{examplefirst}\quad
\begin{enumerate}
\item Suppose $E$ and $F$ are $SQ_p$ spaces and let $L=E\oplus_p F$, the $\ell_p$ direct sum of $E$ and $F$. Then $L$ is also an $SQ_p$ space  \cite[Proposition 5]{Herz} and the mapping
$$x\mapsto \left[\begin{array}{cc} 0 & 0 \\ x & 0 \end{array}\right]$$
is an isometric embedding of $\mathcal{B}(E,F)$ into $\mathcal{B}(L)$. Using this we can view $\mathcal{B}(E,F)$ as a $p$-operator space.
Note that $M_n(\mathcal{B}(E,F))$ is isometrically isomorphic to $\mathcal{B}(\ell_p^n(E),\ell_p^n(F))$.
\item The identification $L_p(\mu)=\mathcal{B}(\mathbb{C},L_p(\mu))\subseteq \mathcal{B}(\mathbb{C}\oplus_pL_p(\mu))$ gives a $p$-operator space structure on $L_p(\mu)$ called the \textit{column p-operator space structure} of $L_p(\mu)$, which we denote by $L_p^c(\mu)$. Similarly, the identification $L_{p'}(\mu)=\mathcal{B}(L_p(\mu),\mathbb{C})$ gives rise to $p$-operator space structure on $L_{p'}(\mu)$ which we denote by $L_{p'}^r(\mu)$ and call the \textit{row p-operator space structure} of $L_{p'}(\mu)$. In general, we can define $E^c$ and $(E')^r$ for any $E\in SQ_p$, where $E'$ is the Banach dual space of $E$.
\end{enumerate}
\end{example}

Note that a linear map $u:X\to Y$ between $p$-operator spaces $X$ and $Y$ induces a map $u_n:M_n(X)\to M_n(Y)$ by applying $u$ entrywise. We say that $u$ is $p$-\textit{completely bounded} if $\|u\|_{pcb}:=\sup_n\|u_n\|<\infty$. Similarly, we define $p$-\textit{completely contractive}, $p$-\textit{completely isometric}, and $p$-\textit{completely quotient} maps. We write $\mathcal{CB}_p(X,Y)$ for the space of all $p$-completely bounded maps from $X$ into $Y$.

To turn the mapping space $\mathcal{CB}_p(X,Y)$ between two $p$-operator spaces $X$ and $Y$ into a $p$-operator space, we define a norm on $\mathbb{M}_n(\mathcal{CB}_p(X,Y))$ by identifying this space with $\mathcal{CB}_p(X,M_n(Y))$. Using Le Merdy's theorem, one can show that $\mathcal{CB}_p(X,Y)$ itself is a $p$-operator space. In particular, the $p$-\textit{operator dual space} of $X$ is defined to be $\mathcal{CB}_p(X,\mathbb{C})$. The next lemma by Daws shows that we may identify the Banach dual space $X'$ of $X$ with the $p$-operator dual space $\mathcal{CB}_p(X,\mathbb{C})$ of $X$.

\begin{lemma}\cite[Lemma 4.2]{Daws} \label{banach dual = operator dual}
Let $X$ be a $p$-operator space, and let $\varphi\in X'$, the Banach dual of $X$. Then $\varphi$ is $p$-completely bounded as a map to $\mathbb{C}$. Moreover, $\|\varphi\|_{pcb}=\|\varphi\|$.
\end{lemma}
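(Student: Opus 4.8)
The plan is to establish the two inequalities $\|\varphi\|\le\|\varphi\|_{pcb}$ and $\|\varphi\|_{pcb}\le\|\varphi\|$ separately. The first is immediate, since under the identifications $M_1(X)=X$ and $M_1(\mathbb{C})=\mathbb{C}$ the amplification $\varphi_1$ is just $\varphi$ itself. So the whole content is to show $\|\varphi_n\|\le\|\varphi\|$ for every $n$, which will simultaneously prove that $\varphi$ is $p$-completely bounded. Fix $u=[x_{ij}]\in M_n(X)$; the task is to bound the norm of the scalar matrix $[\varphi(x_{ij})]$ as an operator on $\ell_p^n$.

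The key observation is that, using the isometric identification $(\ell_p^n)'=\ell_{p'}^n$, the operator norm of a scalar matrix $A=[a_{ij}]$ on $\ell_p^n$ is computed by duality as
\[
  \|A\|_{\mathcal{B}(\ell_p^n)}=\sup\Bigl\{\,\Bigl|\sum_{i,j}a_{ij}\xi_j\eta_i\Bigr| : \ \xi\in\ell_p^n,\ \|\xi\|_p\le1,\ \eta\in\ell_{p'}^n,\ \|\eta\|_{p'}\le1\,\Bigr\}.
\]
Applying this with $a_{ij}=\varphi(x_{ij})$ and pulling $\varphi$ out of the finite sum, one has $\sum_{i,j}\varphi(x_{ij})\xi_j\eta_i=\varphi(\eta\,u\,\xi)$, where $\eta\,u\,\xi$ denotes the matrix product of $\eta\in\mathbb{M}_{1,n}$, $u\in M_n(X)$, and $\xi\in\mathbb{M}_{n,1}$, regarded as an element of $M_1(X)=X$.

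Now I would apply axiom $\mathcal{M}_p$ with $\alpha=\eta$ and $\beta=\xi$ (so that the axiom's $m$ and $n$ specialize to $n$ and $1$): this gives $\|\eta\,u\,\xi\|_X\le\|\eta\|\,\|u\|_{M_n(X)}\,\|\xi\|$. Here $\|\xi\|$ is the norm of $\xi$ as an operator $\ell_p^1\to\ell_p^n$, i.e.\ $\|\xi\|_p$, and $\|\eta\|$ is the norm of $\eta$ as an operator $\ell_p^n\to\ell_p^1$, i.e.\ the norm of the functional $\zeta\mapsto\sum_i\eta_i\zeta_i$ on $\ell_p^n$, which by the duality above is $\|\eta\|_{p'}$. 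Hence $|\varphi(\eta\,u\,\xi)|\le\|\varphi\|\,\|\eta\|_{p'}\,\|u\|_{M_n(X)}\,\|\xi\|_p$, and taking the supremum over $\|\xi\|_p\le1$ and $\|\eta\|_{p'}\le1$ yields $\|[\varphi(x_{ij})]\|_{\mathcal{B}(\ell_p^n)}\le\|\varphi\|\,\|u\|_{M_n(X)}$, that is, $\|\varphi_n\|\le\|\varphi\|$.

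There is no real obstacle here: the argument is formal, using only the axioms $\mathcal{D}_\infty$ and $\mathcal{M}_p$ (in fact only $\mathcal{M}_p$), and needs neither Le Merdy's representation theorem nor any structure theory for $SQ_p$ spaces. The only points demanding care are the two routine identifications of the operator norms of the row and column scalar matrices $\eta$ and $\xi$ with $\ell_{p'}^n$- and $\ell_p^n$-norms, and the duality formula for $\|A\|_{\mathcal{B}(\ell_p^n)}$, all of which reduce to the isometry $(\ell_p^n)'=\ell_{p'}^n$. It is worth noting that this is exactly the situation in which axiom $\mathcal{M}_p$ is invoked with one amplifying matrix a genuine covector and the other a genuine vector, which is why the scalar-valued case is so well behaved.
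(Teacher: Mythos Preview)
Your argument is correct. The paper does not supply its own proof of this lemma; it is quoted from Daws \cite[Lemma 4.2]{Daws} and stated without demonstration. The proof you give is the standard one: reduce $\|\varphi_n\|\le\|\varphi\|$ to the duality formula for the $\mathcal{B}(\ell_p^n)$-norm of a scalar matrix, rewrite the bilinear pairing as $\varphi(\eta u\xi)$, and invoke axiom $\mathcal{M}_p$ together with the identifications $\|\xi\|_{\mathcal{B}(\ell_p^1,\ell_p^n)}=\|\xi\|_p$ and $\|\eta\|_{\mathcal{B}(\ell_p^n,\ell_p^1)}=\|\eta\|_{p'}$. This is precisely the argument one finds in Daws's paper, so there is nothing to add.
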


If $E=L_p(\mu)$ for some measure $\mu$ and $X\subseteq \mathcal{B}(E)=\mathcal{B}(L_p(\mu))$, then we say that $X$ is a $p$-\textit{operator space on} $L_p$ \textit{space}. These $p$-operator spaces are often easier to work with. For example, let $\kappa_X:X\to X''$ denote the canonical inclusion from a $p$-operator space $X$ into its second dual. Contrary to operator spaces, $\kappa_X$ is \textit{not} always $p$-completely isometric. Thanks to the following theorem by Daws, however, we can easily characterize those $p$-operator spaces with the property that the canonical inclusion is $p$-completely isometric.

\begin{prop}\cite[Proposition 4.4]{Daws} \label{Daws characterization}
Let $X$ be a $p$-operator space. Then $\kappa_X$ is a $p$-complete contraction. Moreover, $\kappa_X$ is a $p$-complete isometry if and only if $X\subseteq \mathcal{B}(L_p(\mu))$ $p$-completely isometrically for some measure $\mu$.
\end{prop}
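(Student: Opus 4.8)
The plan is to pass to the matricial picture of the bidual and reduce everything to a single ``norm-detection'' property. Since the Banach dual and the $p$-operator dual of a $p$-operator space coincide (the preceding lemma), and since $M_n(\mathcal{CB}_p(X',\mathbb{C}))=\mathcal{CB}_p(X',M_n)$ by the definition of the $p$-operator structure on a $\mathcal{CB}_p$-space, we get an isometric identification $M_n(X'')=\mathcal{CB}_p(X',M_n)$; under it $\kappa_n(x)$ is the map $X'\to M_n$, $\varphi\mapsto\varphi_n(x)$, where $\varphi_n$ denotes the $n$-th amplification of $\varphi\colon X\to\mathbb{C}$. To estimate $\|\kappa_n(x)\|_{pcb}$ I would test this map against an arbitrary $\psi\in M_m(X')=\mathcal{CB}_p(X,M_m)$ and note that $(\kappa_n(x))_m(\psi)$ and $\psi_n(x)$ are literally the same array of scalars, viewed once in $M_m(M_n)$ and once in $M_n(M_m)$. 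Both of these spaces are canonically $\mathcal{B}(\ell_p^{mn})$, and the reshuffle between them is conjugation by the isometry of $\ell_p^{mn}$ that interchanges the two tensor orderings, hence is $p$-completely isometric; therefore $\|(\kappa_n(x))_m(\psi)\|=\|\psi_n(x)\|\le\|\psi\|_{pcb}\,\|x\|_{M_n(X)}$. Taking the supremum over $m$ and over $\psi$ in the unit ball proves that $\kappa_X$ is a $p$-complete contraction, and at the same time yields
\begin{equation*}
\|\kappa_n(x)\|_{M_n(X'')}=\sup\bigl\{\,\|\psi_n(x)\|:m\in\mathbb{N},\ \psi\colon X\to M_m\ \text{$p$-completely contractive}\,\bigr\}.\tag{$\ast$}
\end{equation*}
By $(\ast)$, the ``moreover'' statement reduces to proving that the right-hand side equals $\|x\|_{M_n(X)}$ for all $n$ and all $x\in M_n(X)$ exactly when $X$ is $p$-completely isometric to a $p$-operator space on an $L_p$ space.

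For the ``if'' half of the reduction, suppose $X\subseteq\mathcal{B}(L_p(\mu))$ $p$-completely isometrically; fix $x\in M_n(X)=\mathcal{B}(\ell_p^n(L_p(\mu)))$ and $\varepsilon>0$. I would choose $\xi\in\ell_p^n(L_p(\mu))$ with $\|\xi\|\le1$ and $\|x\xi\|>\|x\|-\varepsilon$, and a norming functional $\eta\in\ell_{p'}^n(L_{p'}(\mu))$ with $\|\eta\|\le1$ and $\langle x\xi,\eta\rangle$ within $\varepsilon$ of $\|x\xi\|$, and then approximate the finitely many coordinate functions of $\xi$ and of $\eta$ by simple functions built from one common finite partition into sets of finite measure. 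Letting $e_1,\dots,e_{m_1}$ and $f_1,\dots,f_{m_2}$ be the $L_p$- and $L_{p'}$-normalised indicator functions of the pieces of that partition (so they have pairwise disjoint supports), I would look at the ``compression'' $\psi\colon X\to M_{m_2,m_1}\subseteq M_m$ (with $m=\max\{m_1,m_2\}$) defined by $\psi(y)=(\langle y\,e_b,f_a\rangle)_{a,b}$. This $\psi$ factors as $y\mapsto W^{*}yV$ with $V\colon\ell_p^{m_1}\to L_p(\mu)$ and $W\colon\ell_{p'}^{m_2}\to L_{p'}(\mu)$ the maps determined by the $e_b$ and the $f_a$; disjointness of supports together with the normalisations force $\|V\|,\|W\|\le1$, and the same bound persists after amplifying, so $\psi$ is $p$-completely contractive. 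A direct computation, pairing $\psi_n(x)$ against the coordinate vectors of $\xi$ and of $\eta$, then gives $\|\psi_n(x)\|>\|x\|_{M_n(X)}-2\varepsilon$; with $(\ast)$ and the first part this forces equality, so $\kappa_X$ is a $p$-complete isometry.

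For the converse, assume $\kappa_X$ is a $p$-complete isometry, so $(\ast)$ reads $\|x\|_{M_n(X)}=\sup_{(m,\psi)}\|\psi_n(x)\|$ for all $n$ and $x$, the supremum being over all pairs $(m,\psi)$ with $\psi\colon X\to M_m$ $p$-completely contractive. Let $\nu$ be the counting measure on the disjoint union of the index sets $\{1,\dots,m\}$, one copy for each such pair, so that $L_p(\nu)$ is the $\ell_p$-direct sum of the corresponding spaces $\ell_p^m$. The map $\Psi=\bigoplus_{(m,\psi)}\psi$ sends $X$ into the block-diagonal operators on $L_p(\nu)$, and a direct $\ell_p$-computation shows $\|\Psi_n(x)\|_{M_n(\mathcal{B}(L_p(\nu)))}=\sup_{(m,\psi)}\|\psi_n(x)\|=\|x\|_{M_n(X)}$. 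Hence $\Psi$ is a $p$-complete isometry of $X$ into $\mathcal{B}(L_p(\nu))$, that is, $X$ is a $p$-operator space on an $L_p$ space, as desired.

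The step I expect to be the main obstacle is the approximation in the ``if'' half: from a single finite partition one must extract blocks $e_b$ and $f_a$ that simultaneously keep $\psi$ $p$-completely contractive --- a constraint on every amplification $\psi_k$ at once, which is exactly why the pairwise disjoint supports and the $L_p$/$L_{p'}$ normalisations are unavoidable --- while being fine enough that $\psi_n(x)$ comes within $2\varepsilon$ of $\|x\|_{M_n(X)}$; balancing the two demands, and verifying that the factorisation $\psi=W^{*}(\cdot)V$ really does make all amplifications contractive, is the delicate point. Everything else --- the reshuffle being $p$-completely isometric, and block-diagonal operators reproducing the matrix norms of the factors --- is a routine $\ell_p$ computation.
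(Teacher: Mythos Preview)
The paper does not prove this proposition; it is quoted verbatim from \cite[Proposition 4.4]{Daws} and no argument is given in the present paper, so there is nothing here to compare your proposal against.

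That said, your outline is the standard proof and is correct. The identification $M_n(X'')=\cbp(X',M_n)$ together with the shuffle $M_m(M_n)\cong M_n(M_m)$ (conjugation by a permutation isometry of $\ell_p^{mn}$) gives the $p$-complete contraction and the formula $(\ast)$ simultaneously. For the converse direction, the $\ell_p$-direct sum over all pairs $(m,\psi)$ is a set (a countable union over $m$ of unit balls of the Banach spaces $\cbp(X,M_m)$), so $L_p(\nu)$ is a genuine $L_p$ space and the block-diagonal embedding is $p$-completely isometric by $(\ast)$. For the forward direction, your compression map $\psi(y)=W^{*}yV$ is $p$-completely contractive for exactly the reason you give: disjoint supports and the correct normalisations make $V\colon\ell_p^{m_1}\to L_p(\mu)$ and $W\colon\ell_{p'}^{m_2}\to L_{p'}(\mu)$ isometries, and this persists under $\ell_p$-amplification, so each $(I_n\otimes W)^{*}(\cdot)(I_n\otimes V)$ is a contraction. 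The simple-function approximation then pushes $\|\psi_n(x)\|$ as close to $\|x\|_{M_n(X)}$ as one likes. One cosmetic point: since you build $e_b$ and $f_a$ from a single common partition, you will in fact have $m_1=m_2$ (the same index set, with $e_b$ and $f_b$ proportional indicators normalised in $L_p$ and $L_{p'}$ respectively); the distinction does no harm but is unnecessary.
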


\section{Non-existence of $p$-Arveson-Wittstock-Hahn-Banach Theorem}

Let $V\subseteq W$ be two operator spaces. Arveson-Wittstock-Hahn-Banach theorem asserts that every completely bounded map $\varphi:V\to \mathcal{B}(H)$ has a completely bounded extension $\tilde{\varphi}:W\to \mathcal{B}(H)$, where $H$ is a Hilbert space. For $p$-operator spaces, the following question naturally arises.

\begin{question}
Let $V\subseteq W$ be $p$-operator spaces and $E$ an $SQ_p$ space. Does every $p$-completely bounded map $\varphi:V\to \mathcal{B}(E)$ have a $p$-completely bounded extension $\tilde{\varphi}:W\to \mathcal{B}(E)$?
\end{question} 

To show that this question has a negative answer, let $p\neq 2$, and let $E$ and $\lpw$ such that $E$ is a Hilbert space embedding to $\lpw$. The existence of such $E$ and $\lpw$ is guaranteed by, for example, \cite[Proposition 8.7]{DefanFloret}. Let $J:E \hookrightarrow \lpw$ denote the isometric embedding, then we can view $E$ as a subspace of $\lpw$. 

\begin{lemma}
Let $J$ be as above. With $p$-operator space structure $E^c$ and $\lpw^c$, $J$ becomes a $p$-complete isometry.
\end{lemma}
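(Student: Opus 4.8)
The plan is to compute the matrix norms on $E^c$ and on $\lpw^c$ explicitly from the definition of the column structure, and then to observe that the resulting expressions involve only the underlying Banach space norms; since $J$ is an isometric embedding of Banach spaces, the two norms must agree. First I would unravel the column structure. By definition $E^c$ is the image of $E$ under the map $x\mapsto(\lambda\mapsto\lambda x)\in\mathcal{B}(\mathbb{C},E)$, viewed inside $\mathcal{B}(\mathbb{C}\oplus_p E)$, and likewise $\lpw^c$ sits inside $\mathcal{B}(\mathbb{C}\oplus_p\lpw)$. By Example~\ref{examplefirst}(a), $M_n(\mathcal{B}(\mathbb{C},E))$ is isometrically isomorphic to $\mathcal{B}(\ell_p^n(\mathbb{C}),\ell_p^n(E))=\mathcal{B}(\ell_p^n,\ell_p^n(E))$, and under this identification a matrix $(x_{ij})\in M_n(E^c)$ corresponds to the operator $\ell_p^n\to\ell_p^n(E)$ sending $(\lambda_j)_j$ to $\big(\sum_j\lambda_j x_{ij}\big)_i$. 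Hence
\[
\|(x_{ij})\|_{M_n(E^c)}=\sup\Big\{\Big(\sum_{i=1}^n\Big\|\sum_{j=1}^n\lambda_j x_{ij}\Big\|_E^p\Big)^{1/p}\ :\ \sum_{j=1}^n|\lambda_j|^p\le 1\Big\},
\]
and the same formula with $\|\cdot\|_E$ replaced by $\|\cdot\|_{\lpw}$ computes $\|(\xi_{ij})\|_{M_n(\lpw^c)}$.

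Next I would apply $J$ entrywise. Since $J$ is linear, $\sum_j\lambda_j J(x_{ij})=J\big(\sum_j\lambda_j x_{ij}\big)$ for any scalars $\lambda_j$, and since $J$ is isometric, $\big\|J\big(\sum_j\lambda_j x_{ij}\big)\big\|_{\lpw}=\big\|\sum_j\lambda_j x_{ij}\big\|_E$. Substituting into the formula above gives $\|J_n((x_{ij}))\|_{M_n(\lpw^c)}=\|(x_{ij})\|_{M_n(E^c)}$ for every $n$ and every $(x_{ij})\in M_n(E^c)$, which is precisely the assertion that $J$ is a $p$-complete isometry.

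The computation is routine; the only point that needs care is the identification $M_n(F^c)\cong\mathcal{B}(\ell_p^n,\ell_p^n(F))$ (for $F\in SQ_p$), and the fact, implicit in it, that the column matrix norms are determined by $F$ as a Banach space and not by any particular representation on a $\mathcal{B}(\cdot)$. Once this is in hand there is no real obstacle: the hypothesis that $E$ is a Hilbert space plays no role beyond ensuring $E\in SQ_p$ (so that $E^c$ is defined and $J$ lands inside $\lpw$), and the argument applies verbatim to any isometric embedding between $SQ_p$ spaces equipped with their column structures.
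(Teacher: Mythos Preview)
Your proof is correct and essentially identical to the paper's own argument: both compute the $M_n(E^c)$-norm via the identification $M_n(\mathcal{B}(\mathbb{C},E))\cong\mathcal{B}(\ell_p^n,\ell_p^n(E))$, obtain the same explicit supremum formula in terms of $\|\sum_j\lambda_j x_{ij}\|_E$, and then use linearity and the isometric property of $J$ to match the two norms. Your closing remark that the Hilbert-space hypothesis on $E$ is not actually needed for this lemma is a valid observation that the paper does not make explicit.
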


\begin{proof}
From Example \ref{examplefirst}, we note that $M_n(E^c)\subseteq M_n(\bp(\mathbb{C}, E))=\bp(\ell_p^n, \ell_p^n(E))$. For $[\xi_{ij}]\in M_n(E^c)$, the norm is given by
$$\|[\xi_{ij}]\|^p=\sup\left\{\sum_{i=1}^n\left\|\sum_{j=1}^n\lambda_j\xi_{ij}\right\|_E^p: \lambda_j\in \mathbb{C},~\sum_{j=1}^n |\lambda_j|^p\leq 1\right\}.$$
Since $J$ is an isometry, 
$$\left\|J\left(\sum_{j=1}^n\lambda_j\xi_{ij}\right)\right\|_{\lpw}=\left\|\sum_{j=1}^n\lambda_j\xi_{ij}\right\|_E$$
and it follows that
\begin{eqnarray*}\|J_n([\xi_{ij}])\|^p&=&\sup\left\{\sum_{i=1}^n\left\|\sum_{j=1}^n\lambda_jJ(\xi_{ij})\right\|_{\lpw}^p: \lambda_j\in \mathbb{C},~\sum_{j=1}^n |\lambda_j|^p\leq 1\right\}\\
&=&\sup\left\{\sum_{i=1}^n\left\|J\left(\sum_{j=1}^n\lambda_j\xi_{ij}\right)\right\|_{\lpw}^p: \lambda_j\in \mathbb{C},~\sum_{j=1}^n |\lambda_j|^p\leq 1\right\} \\
&=& \|[\xi_{ij}]\|^p.
\end{eqnarray*}
\end{proof}

Let $\tilde{E}=\mathbb{C}\oplus_p E$. Let $\pi:\tilde{E}\to E$ denote the projection from $\tilde{E}$ onto $E$ and define $\varphi:E^c\to \mathcal{B}(\tilde{E})$ and $\psi:\mathcal{B}(\tilde{E})\to E^c$ by 
$$\varphi(\xi)=T_\xi,\quad T_\xi(\lambda\oplus_p e)=0\oplus_p \lambda \xi,\quad \lambda\in \mathbb{C},\quad e\in E$$
and
$$\psi(T)=\pi T(1\oplus_p 0), \quad T\in \mathcal{B}(\tilde{E})$$
(see the diagram below).
$$\xymatrix{ \lpw^c \ar@/^1pc/[rrd]^{\tilde{\varphi}}  & & \\ E^c \ar[u]^J \ar@<0.5ex>[rr]^\varphi & & \bp(\tilde{E})\ar@<0.5ex>[ll]^\psi}$$
It is then easy to check that $\varphi$ and $\psi$ are $p$-complete contractions with $\psi\circ\varphi=id_{E^c}$. Suppose that $\varphi:E^c\to \mathcal{B}(\tilde{E})$ extends to $\tilde{\varphi}:\lpw^c\to \mathcal{B}(\tilde{E})$. Define $P:\lpw^c\to E^c$ by $P=\psi\circ \tilde{\varphi}$, then it follows that $P$ is a $p$-completely contractive projection onto $E^c$, meaning that $E$ must be a $1$-complemented subspace of $\lpw$. This is, however, impossible, because it would imply that a Hilbert space $E$ is isometrically isomorphic to some $L_p$ space with $p\neq 2$.

\section{A predual of $\cbp(V,M_n)$}

In this section, we define a normed space structure on $\mathbb{M}_n(V)$ whose Banach dual is isometrically isomorphic to $\cbp(V,M_n)$.

%%%%%%%%%%%%%%%%%%%%%%%%%%%%%%%%%%

\begin{lemma} \label{p norm and p' norm comparison}
Let $1< p,p' < \infty$ with $1/p+1/p'=1$. Let
$\lambda=\{\lambda_j\}_{1\leq j\leq n}$ be a finite sequence in
$\mathbb{C}$. Then
$$\|\lambda\|_{\ell_p^n}\leq n^{|1/p-1/p'|}\cdot\|\lambda\|_{\ell_{p'}^n}.$$
\end{lemma}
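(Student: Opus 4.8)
The plan is to compare the $\ell_p^n$ and $\ell_{p'}^n$ norms using the monotonicity of $\ell_q$ norms in $q$ together with H\"older's inequality, handling the two cases $p \leq 2 \leq p'$ and $p' \leq 2 \leq p$ (equivalently $p \geq 2$ and $p \leq 2$) separately — though by the symmetry of the roles of $p$ and $p'$ it suffices to treat one. First I would recall the standard inequalities relating $\ell_q$ norms on $\mathbb{C}^n$: for $0 < q \leq r \leq \infty$ one has $\|\lambda\|_{\ell_r^n} \leq \|\lambda\|_{\ell_q^n} \leq n^{1/q - 1/r}\|\lambda\|_{\ell_r^n}$, where the right-hand inequality follows from H\"older applied to $\sum_j |\lambda_j|^q \cdot 1$ with exponents $r/q$ and its conjugate.

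Next I would split into cases. Suppose first that $p \leq p'$, i.e. $p \leq 2 \leq p'$, so $|1/p - 1/p'| = 1/p - 1/p'$. Then $p \leq p'$ gives directly $\|\lambda\|_{\ell_p^n} \leq n^{1/p - 1/p'}\|\lambda\|_{\ell_{p'}^n}$ from the second inequality above with $q = p$, $r = p'$. Now suppose $p \geq p'$, i.e. $p \geq 2 \geq p'$, so $|1/p - 1/p'| = 1/p' - 1/p$. In this case $p' \leq p$, so $\|\lambda\|_{\ell_p^n} \leq \|\lambda\|_{\ell_{p'}^n} \leq n^{1/p' - 1/p}\|\lambda\|_{\ell_{p'}^n}$, the first step by monotonicity (larger exponent, smaller norm) and the second since $n^{1/p' - 1/p} \geq 1$. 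Combining the two cases yields $\|\lambda\|_{\ell_p^n} \leq n^{|1/p - 1/p'|}\|\lambda\|_{\ell_{p'}^n}$ in all cases.

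There is no real obstacle here; the only thing to be careful about is getting the direction of the norm monotonicity right (the $\ell_q$ norm \emph{decreases} as $q$ increases) and making sure the exponent bookkeeping with absolute values is consistent across the two cases. One could also give a unified one-line argument: in all cases $\|\lambda\|_{\ell_p^n} \leq \|\lambda\|_{\ell_{\min(p,p')}^n} \leq n^{1/\min(p,p') - 1/\max(p,p')}\|\lambda\|_{\ell_{\max(p,p')}^n} \leq n^{|1/p-1/p'|}\|\lambda\|_{\ell_{p'}^n}$, where the last step uses $n^{1/\min - 1/\max} = n^{|1/p-1/p'|}$ and, when $p' = \min(p,p')$, that passing from $\|\lambda\|_{\ell_{\min}^n}$ to $\|\lambda\|_{\ell_{p'}^n}$ is an equality, while when $p' = \max(p,p')$ it only makes the bound larger. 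I would likely present the clean two-case version for readability.
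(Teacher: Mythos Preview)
Your proposal is correct and follows essentially the same approach as the paper: a case split on whether $p\leq p'$ or $p\geq p'$, using monotonicity of $\ell_q$ norms in the latter case and the H\"older-based inequality $\|\lambda\|_{\ell_q^n}\leq n^{1/q-1/r}\|\lambda\|_{\ell_r^n}$ in the former. The only cosmetic difference is that the paper writes out the H\"older step explicitly (with $q=p'/p$) rather than quoting the general $\ell_q$--$\ell_r$ comparison.
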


\begin{proof}
There is nothing to prove if $p=p'=2$. If $p>p'$, then $\|\lambda\|_{\ell_p^n}\leq \|\lambda\|_{\ell_{p'}^n}
\leq n^{|1/p-1/p'|}\cdot\|\lambda\|_{\ell_{p'}^n}$ since
$n^{|1/p-1/p'|}\geq 1$. Finally, assume $1<p<p'$ and let
$q=\frac{p'}{p}>1$ and let $q'$ be the conjugate exponent to $q$. By
H\"{o}lder's inequality,
$$\|\lambda\|_{\ell_p^n}^p\leq \left(\sum_{j=1}^n |\lambda_j|^{pq}\right)^{1/q}\cdot n^{1/q'}
=\left(\sum_{j=1}^n |\lambda_j|^{p'}\right)^{p/p'}\cdot n^{1-p/p'}$$
and hence $\|\lambda\|_{\ell_p^n}\leq
n^{|1/p-1/p'|}\cdot\|\lambda\|_{\ell_{p'}^n}$.
\end{proof}

\begin{lemma} \label{finite version operator norm and matrix p norm comparison}
Let $\alpha=[\alpha_{ij}]\in \mathbb{M}_{n,r}$ and
$\beta=[\beta_{kl}]\in \mathbb{M}_{r,n}$. Let $1< p,p'< \infty$ with
$1/p+1/p'=1$. Then we have
$$\|\alpha\|_{\mathcal{B}(\ell_p^r,\ell_p^n)}\leq \|\alpha\|_{p'}\cdot n^{|1/p-1/p'|}\quad \text{and} \quad \|\beta\|_{\mathcal{B}(\ell_p^n,\ell_p^r)}\leq \|\beta\|_{p}\cdot n^{|1/p-1/p'|},$$ where
$$\|\alpha\|_{p'}=\left(\sum_{i=1}^n\sum_{j=1}^r |\alpha_{ij}|^{p'}\right)^{1/p'}\quad\text{and}\quad
\|\beta\|_{p}=\left(\sum_{k=1}^r\sum_{l=1}^n
|\beta_{kl}|^{p}\right)^{1/p}.$$
\end{lemma}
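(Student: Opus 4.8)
The plan is to estimate each operator norm by viewing the matrix as a map on $\ell_p^r$ or $\ell_p^n$ and comparing with an $\ell_{p'}$-type Hilbert–Schmidt bound, then invoking Lemma \ref{p norm and p' norm comparison} to absorb the mismatch between $p$ and $p'$ into the factor $n^{|1/p-1/p'|}$. First I would handle $\alpha\in\mathbb{M}_{n,r}$ as a map $\ell_p^r\to\ell_p^n$. For $\xi=(\xi_j)\in\ell_p^r$ the $i$-th coordinate of $\alpha\xi$ is $\sum_{j=1}^r\alpha_{ij}\xi_j$, which by Hölder is bounded by $\bigl(\sum_{j=1}^r|\alpha_{ij}|^{p'}\bigr)^{1/p'}\|\xi\|_{\ell_p^r}$. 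Summing the $p$-th powers over $i$ gives
$$\|\alpha\xi\|_{\ell_p^n}^p\leq \Bigl(\sum_{i=1}^n\bigl(\textstyle\sum_{j=1}^r|\alpha_{ij}|^{p'}\bigr)^{p/p'}\Bigr)\|\xi\|_{\ell_p^r}^p.$$
So it suffices to compare $\bigl(\sum_{i=1}^n a_i^{p/p'}\bigr)^{1/p}$ with $\bigl(\sum_{i=1}^n a_i\bigr)^{1/p'}=\|\alpha\|_{p'}$, where $a_i=\sum_j|\alpha_{ij}|^{p'}$; equivalently, comparing the $\ell_{p/p'}$-"norm" (or $\ell_{p'/p}$-norm, depending on which is $\geq 1$) with the $\ell_1$-norm of the nonnegative sequence $(a_i)_{i=1}^n$. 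This is exactly the content of the two-sided comparison of $\ell_q^n$-norms with a factor $n^{|1/q-1/q'|}$-type estimate: applying Lemma \ref{p norm and p' norm comparison} (to the sequence $(a_i^{1/p})_i$ or its analogue, with exponents $p$ and $p'$) yields $\bigl(\sum_i a_i^{p/p'}\bigr)^{1/p}\leq n^{|1/p-1/p'|}\bigl(\sum_i a_i\bigr)^{1/p'}$, which gives the claimed bound $\|\alpha\|_{\mathcal{B}(\ell_p^r,\ell_p^n)}\leq n^{|1/p-1/p'|}\|\alpha\|_{p'}$.

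For $\beta\in\mathbb{M}_{r,n}$ as a map $\ell_p^n\to\ell_p^r$, I would argue dually (or directly): the transpose $\beta^t\in\mathbb{M}_{n,r}$ acts $\ell_{p'}^r\to\ell_{p'}^n$ with $\|\beta^t\|_{\mathcal{B}(\ell_{p'}^r,\ell_{p'}^n)}=\|\beta\|_{\mathcal{B}(\ell_p^n,\ell_p^r)}$, and $\|\beta^t\|_p=\|\beta\|_p$, so applying the first estimate with the roles of $p$ and $p'$ swapped gives $\|\beta\|_{\mathcal{B}(\ell_p^n,\ell_p^r)}\leq n^{|1/p'-1/p|}\|\beta\|_p=n^{|1/p-1/p'|}\|\beta\|_p$. (Note the exponent $n^{|1/p-1/p'|}$ rather than $r^{|1/p-1/p'|}$ because the summation producing the $n$-dependence is over the $n$-indexed coordinate in each case.)

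The main obstacle I expect is purely bookkeeping: keeping straight which index ($n$ versus $r$) the $n^{|1/p-1/p'|}$ factor comes from, and making sure the direction of the inequality in Lemma \ref{p norm and p' norm comparison} is the one needed in both cases $p<p'$ and $p>p'$. In particular one must check that in the reduction above we apply the comparison to a sequence of length $n$ (the number of rows of $\alpha$, equivalently the number of columns of $\beta$), so the bound is governed by $n$ and not $r$; this is the only subtlety, since the Hölder step itself is routine and the symmetry argument for $\beta$ is immediate once the statement for $\alpha$ is in place.
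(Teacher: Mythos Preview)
Your proposal is correct and follows essentially the same approach as the paper: H\"older on each row of $\alpha$, then Lemma~\ref{p norm and p' norm comparison} applied to the resulting length-$n$ sequence (note: the sequence to which the lemma applies is $(a_i^{1/p'})_i$, not $(a_i^{1/p})_i$, so that $\sum_i b_i^p=\sum_i a_i^{p/p'}$ and $\sum_i b_i^{p'}=\sum_i a_i$), and then the transpose/duality argument for $\beta$. The paper's only cosmetic difference is that it applies Lemma~\ref{p norm and p' norm comparison} directly to $\eta_i=\bigl|\sum_j\alpha_{ij}\xi_j\bigr|$ and afterwards bounds $\bigl(\sum_i\eta_i^{p'}\bigr)^{1/p'}\le\|\alpha\|_{p'}$, but the two orderings yield the same estimate.
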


\begin{proof}
Suppose $\xi=\{\xi_j\}_{j=1}^r$ is a unit vector in $\ell_p^r$. For each $i$,
$1\leq i\leq n$, let $\eta_i=\left|\sum_{j=1}^r
\alpha_{ij}\xi_j\right|$, then by H\"{o}lder's inequality,
$\eta_i\leq \left(\sum_{j=1}^r|\alpha_{ij}|^{p'}\right)^{1/p'}$
and by Lemma \ref{p norm and p' norm comparison},
$$\left(\sum_{i=1}^n \eta_i^p\right)^{1/p}\leq n^{|1/p-1/p'|}\cdot \left(\sum_{i=1}^n \eta_i^{p'}\right)^{1/p'}\leq n^{|1/p-1/p'|}\cdot\|\alpha\|_{p'} $$
and hence we get $\|\alpha\|_{\mathcal{B}(\ell_p^r,\ell_p^n)}\leq
n^{|1/p-1/p'|}\cdot\|\alpha\|_{p'}$.
To prove the second inequality, let $\gamma=\beta^T\in \mathbb{M}_{n,r}$, the transpose of $\beta$. Then by the argument above we have
$$\|\gamma\|_{\mathcal{B}(\ell_{p'}^r, \ell_{p'}^n)}\leq \|\gamma\|_p\cdot n^{|1/p-1/p'|}.$$
Since $\|\gamma\|_{\mathcal{B}(\ell_{p'}^r, \ell_{p'}^n)}=\|\beta\|_{\mathcal{B}(\ell_p^n,\ell_p^r)}$ and
$\|\gamma\|_p=\|\beta\|_p$, we get the desired inequality.
\end{proof}

Let $V$ be a $p$-operator space. Fix $n\in \mathbb{N}$ and define $\|\cdot\|_{1,n}:\mathbb{M}_n(V)\to [0,\infty)$ by
\begin{equation} \label{1,n norm definition}\|v\|_{1,n}=\inf\{\|\alpha\|_{p'}\|w\|\|\beta\|_p:r\in \mathbb{N},\quad v=\alpha w \beta,\quad \alpha\in \mathbb{M}_{n,r},\quad \beta\in \mathbb{M}_{r,n},\quad w\in M_r(V)\},\end{equation}
where $\|\cdot\|_{p'}$ and $\|\cdot\|_p$ as in Lemma \ref{finite version operator norm and matrix p norm comparison}.

\begin{prop}
Suppose that $V$ is a $p$-operator space and $n\in \mathbb{N}$. Then $\|\cdot\|_{1,n}$ defines a norm on $\mathbb{M}_n(V)$.
\end{prop}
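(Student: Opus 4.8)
The plan is to verify the four defining properties of a norm on $\mathbb{M}_n(V)$: that $\|\cdot\|_{1,n}$ is finite-valued, absolutely homogeneous, subadditive, and positive definite. Finiteness and homogeneity are routine. For finiteness I would use the trivial factorization $v = I_n\,v\,I_n$ with $I_n\in\mathbb{M}_n$ the identity matrix, which gives $\|v\|_{1,n}\leq\|I_n\|_{p'}\,\|v\|_{M_n(V)}\,\|I_n\|_p = n^{1/p'}\,n^{1/p}\,\|v\|_{M_n(V)} = n\,\|v\|_{M_n(V)} < \infty$. For homogeneity, if $v=\alpha w\beta$ then $\lambda v = (\lambda\alpha)\,w\,\beta$ and $\|\lambda\alpha\|_{p'}=|\lambda|\,\|\alpha\|_{p'}$, so $\|\lambda v\|_{1,n}\leq|\lambda|\,\|v\|_{1,n}$; applying this with $\lambda^{-1}$ in place of $\lambda$ and $\lambda v$ in place of $v$ gives the reverse inequality when $\lambda\neq 0$, and $\|0\|_{1,n}=0$ follows from the factorization with $w=0$.

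For positive definiteness, the key is the reverse comparison $\|v\|_{M_n(V)}\leq n^{2|1/p-1/p'|}\,\|v\|_{1,n}$. Indeed, for any factorization $v=\alpha w\beta$ as in \eqref{1,n norm definition}, axiom $\mathcal{M}_p$ yields $\|v\|_{M_n(V)}\leq\|\alpha\|_{\mathcal{B}(\ell_p^r,\ell_p^n)}\,\|w\|_{M_r(V)}\,\|\beta\|_{\mathcal{B}(\ell_p^n,\ell_p^r)}$, and Lemma \ref{finite version operator norm and matrix p norm comparison} bounds the two scalar operator norms by $n^{|1/p-1/p'|}\,\|\alpha\|_{p'}$ and $n^{|1/p-1/p'|}\,\|\beta\|_p$ respectively; taking the infimum over all such factorizations gives the stated inequality. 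Since $\|\cdot\|_{M_n(V)}$ is genuinely a norm on $M_n(V)$, it follows that $\|v\|_{1,n}=0$ forces $v=0$.

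The substantive step is subadditivity, and the main obstacle is producing a factorization of $v_1+v_2$ together with a rescaling that makes the estimate sharp. Given nonzero $v_1,v_2\in\mathbb{M}_n(V)$ (the case where one of them vanishes is trivial) and $\epsilon>0$, I would pick factorizations $v_i=\alpha_i w_i\beta_i$ with $\alpha_i\in\mathbb{M}_{n,r_i}$, $\beta_i\in\mathbb{M}_{r_i,n}$, $w_i\in M_{r_i}(V)$, and $\|\alpha_i\|_{p'}\,\|w_i\|\,\|\beta_i\|_p<\|v_i\|_{1,n}+\epsilon=:c_i$; since $v_i\neq 0$, the three factors are all nonzero. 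I would then exploit that $\alpha w\beta$ and the product $\|\alpha\|_{p'}\,\|w\|\,\|\beta\|_p$ are both invariant under the substitutions $\alpha\mapsto s\alpha$, $w\mapsto uw$, $\beta\mapsto(su)^{-1}\beta$: first normalize so that $\|w_i\|=1$, and then, writing $A_i=\|\alpha_i\|_{p'}$ and $B_i=\|\beta_i\|_p$ (so that $A_iB_i<c_i$), rescale once more by $\alpha_i\mapsto t_i\alpha_i$, $\beta_i\mapsto t_i^{-1}\beta_i$ with $t_i=(B_i^p/A_i^{p'})^{1/(p+p')}$. The block factorization
$$v_1+v_2=\begin{bmatrix}\alpha_1 & \alpha_2\end{bmatrix}\,(w_1\oplus w_2)\,\begin{bmatrix}\beta_1\\\beta_2\end{bmatrix}$$
is then of the form appearing in \eqref{1,n norm definition}: the left outer matrix lies in $\mathbb{M}_{n,r_1+r_2}$ and its $\|\cdot\|_{p'}$-norm is $\bigl(\|\alpha_1\|_{p'}^{p'}+\|\alpha_2\|_{p'}^{p'}\bigr)^{1/p'}$ because $\|\cdot\|_{p'}$ is exactly the $\ell_{p'}$-norm of the entry list (similarly the right outer matrix has $\|\cdot\|_p$-norm $\bigl(\|\beta_1\|_p^p+\|\beta_2\|_p^p\bigr)^{1/p}$), while $w_1\oplus w_2\in M_{r_1+r_2}(V)$ has norm $\max\{\|w_1\|,\|w_2\|\}=1$ by axiom $\mathcal{D}_\infty$. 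Consequently
$$\|v_1+v_2\|_{1,n}\leq\bigl((t_1A_1)^{p'}+(t_2A_2)^{p'}\bigr)^{1/p'}\bigl((t_1^{-1}B_1)^{p}+(t_2^{-1}B_2)^{p}\bigr)^{1/p}.$$

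Finally, with the chosen $t_i$ one computes $t_iA_i=(A_iB_i)^{p/(p+p')}$ and $t_i^{-1}B_i=(A_iB_i)^{p'/(p+p')}$, so that $(t_iA_i)^{p'}=(t_i^{-1}B_i)^{p}=(A_iB_i)^{pp'/(p+p')}$; since $1/p+1/p'=1$ forces $p+p'=pp'$, this exponent is $1$, and the right-hand side above collapses to $\bigl(A_1B_1+A_2B_2\bigr)^{1/p'+1/p}=A_1B_1+A_2B_2<c_1+c_2=\|v_1\|_{1,n}+\|v_2\|_{1,n}+2\epsilon$. Letting $\epsilon\to 0$ gives subadditivity. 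The one place that needs care is recognizing that this particular $t_i$ is exactly the equality case of Hölder's inequality applied to the sequences $(t_iA_i)_i$ and $(t_i^{-1}B_i)_i$, which is precisely what makes the final estimate collapse to $\sum_i A_iB_i$ rather than something larger; the rest is bookkeeping with the axioms $\mathcal{D}_\infty$ and $\mathcal{M}_p$ and with Lemma \ref{finite version operator norm and matrix p norm comparison}.
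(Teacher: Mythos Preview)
Your proof is correct and follows essentially the same route as the paper: the block factorization $v_1+v_2=[\alpha_1\ \alpha_2](w_1\oplus w_2)\begin{bmatrix}\beta_1\\\beta_2\end{bmatrix}$ for subadditivity, and the comparison $\|v\|_{M_n(V)}\leq n^{2|1/p-1/p'|}\|v\|_{1,n}$ via axiom~$\mathcal{M}_p$ and Lemma~\ref{finite version operator norm and matrix p norm comparison} for positive definiteness. The only cosmetic difference is that the paper normalizes to $\|\alpha_i\|_{p'}<c_i^{1/p'}$, $\|\beta_i\|_p<c_i^{1/p}$ and then applies Young's inequality, whereas you pick the optimal rescaling $t_i$ directly so that $(t_iA_i)^{p'}=(t_i^{-1}B_i)^p=A_iB_i$; these are the same computation, since your choice is exactly the equality case of Young's inequality.
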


\begin{proof}
Suppose $v_1,v_2\in \mathbb{M}_n(V)$. Let $\epsilon>0$. For $i=1,2$, we can find $\alpha_i$, $\beta_i$, and $w_i$ such that $v_i=\alpha_i w_i \beta_i$ with $\|w_i\|\leq 1$ and
\begin{equation}\label{alpha beta in terms of v}\|\alpha_i\|_{p'}<\left(\|v_i\|_{1,n}+\epsilon\right)^{1/p'},\quad \|\beta_i\|_{p}<\left(\|v_i\|_{1,n}+\epsilon\right)^{1/p}.\end{equation}
Let
$$\alpha=[\alpha_1~ \alpha_2],\quad \beta=\left[\begin{array}{c} \beta_1 \\ \beta_2 \end{array}\right],\quad \text{and}\quad
w=\left[\begin{array}{cc} w_1 & \\ & w_2 \end{array}\right],$$
then $\|\alpha\|_{p'}^{p'}=\|\alpha_1\|_{p'}^{p'}+\|\alpha_2\|_{p'}^{p'}$, $\|\beta\|_{p}^{p}=\|\beta_1\|_{p}^{p}+\|\beta_2\|_{p}^{p}$, and $\|w\|\leq 1$. Since $v_1+v_2=\alpha w \beta$, it follows that
\begin{eqnarray*}
\|v_1+v_2\|_{1,n}&\leq& \|\alpha\|_{p'}\|\beta\|_{p}\\
(\text{Young's inequality})&\leq& \frac{\|\alpha\|_{p'}^{p'}}{p'}+\frac{\|\beta\|_{p}^{p}}{p}\\
&=&\frac{\|\alpha_1\|_{p'}^{p'}+\|\alpha_2\|_{p'}^{p'}}{p'}+\frac{\|\beta_1\|_{p}^{p}+\|\beta_2\|_{p}^{p}}{p}\\
(\text{by }(\ref{alpha beta in terms of v}))&<&\frac{\|v_1\|_{1,n}+\|v_2\|_{1,n}+2\epsilon}{p'}+\frac{\|v_1\|_{1,n}+\|v_2\|_{1,n}+2\epsilon}{p}\\
&=&\|v_1\|_{1,n}+\|v_2\|_{1,n}+2\epsilon.
\end{eqnarray*}
Since $\epsilon$ is arbitrary, we get $\|v_1+v_2\|_{1,n}\leq \|v_1\|_{1,n}+\|v_2\|_{1,n}$.\\
For any $c\in \mathbb{C}$, if $v=\alpha w \beta$, then we have $cv=\alpha (cw) \beta$ and hence $\|cv\|_{1,n}\leq \|\alpha\|_{p'}|c|\|w\|\|\beta\|_p$. Taking the infimum, we get
\begin{equation}\label{scalar part 1}
\|cv\|_{1,n}\leq |c|\|v\|_{1,n}.
\end{equation}
When $c\neq 0$, replacing $c$ by $1/c$ and $v$ by $cv$ in (\ref{scalar part 1}) gives
\begin{equation}\label{scalar part 2}
|c|\|v\|_{1,n}\leq \|cv\|_{1,n},
\end{equation}
so (\ref{scalar part 1}) together with (\ref{scalar part 2}) gives $\|cv\|_{1,n}=|c|\|v\|_{1,n}$, which is obviously true when $c=0$.\\
Finally, suppose $\|v\|_{1,n}=0$. To show that $v=0$, it suffices to show that
\begin{equation}\label{1,n is a norm}
\|v\|\leq n^{2|1/p-1/p'|}\cdot\|v\|_{1,n}.
\end{equation}
Indeed, if $v=\alpha w \beta$ with $\alpha \in \mathbb{M}_{n,r}$, $\beta\in \mathbb{M}_{r,n}$, and $w\in M_r(v)$, then
\begin{eqnarray*}
\|v\|&\leq&\|\alpha\|\|w\|\|\beta\|\\
(\text{by Lemma }\ref{finite version operator norm and matrix p norm comparison})&\leq&\|\alpha\|_{p'}\cdot n^{|1/p-1/p'|}\cdot\|w\|\cdot\|\beta\|_{p}\cdot n^{|1/p-1/p'|}\\
&=&n^{2|1/p-1/p'|}\cdot \|\alpha\|_{p'}\cdot \|w\| \cdot\|\beta\|_{p}.
\end{eqnarray*}
Taking the infimum, (\ref{1,n is a norm}) follows.
\end{proof}

For a $p$-operator space $V$, let $\mathcal{T}_n(V)$ denote the normed space $(\mathbb{M}_n(V), \|\cdot\|_{1,n})$. 

\begin{lemma} \label{TMdual}
For a $p$-operator space $V$, $\mathcal{T}_n(V)'=M_n(V')=\cbp(V,M_n)$ isometrically.
\end{lemma}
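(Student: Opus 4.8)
The plan is to separate the two identifications. The equality $\cbp(V,M_n)=M_n(V')$ is essentially a matter of unwinding definitions: by construction $M_n(V')$ is $M_n$ of the $p$-operator dual space, that is $M_n(\cbp(V,\mathbb{C}))$, and the latter is by definition identified (as a normed space) with $\cbp(V,M_n(\mathbb{C}))=\cbp(V,M_n)$; since Lemma \ref{banach dual = operator dual} identifies $V'$ with $\cbp(V,\mathbb{C})$ isometrically, nothing further is needed here. All the real work goes into showing $\mathcal{T}_n(V)'=\cbp(V,M_n)$ isometrically, which I would do via an explicit duality pairing.

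Concretely, I would pair $u\in\cbp(V,M_n)$ with $v=[v_{ij}]\in\mathbb{M}_n(V)$ by $\langle v,u\rangle=\sum_{i,j=1}^n[u(v_{ij})]_{ij}$ and prove that $u\mapsto\langle\,\cdot\,,u\rangle$ is an isometric isomorphism of $\cbp(V,M_n)$ onto $\mathcal{T}_n(V)'$. The first half is the estimate $|\langle v,u\rangle|\le\|v\|_{1,n}\,\|u\|_{pcb}$. Given a factorization $v=\alpha w\beta$ with $\alpha\in\mathbb{M}_{n,r}$, $w\in M_r(V)$, $\beta\in\mathbb{M}_{r,n}$, the key observation is that, after identifying $M_r(M_n)$ with $\bp(\ell_p^{rn})$ as in Example \ref{examplefirst}, one has $\langle\alpha w\beta,u\rangle=\hat\alpha\,u_r(w)\,\hat\beta$, where $\hat\alpha\in\mathbb{M}_{1,rn}$ and $\hat\beta\in\mathbb{M}_{rn,1}$ are the row, resp.\ column, vectors whose entries are those of $\alpha$, resp.\ $\beta$, so that $\|\hat\alpha\|_{\ell_{p'}^{rn}}=\|\alpha\|_{p'}$ and $\|\hat\beta\|_{\ell_p^{rn}}=\|\beta\|_p$. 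Since $\|u_r(w)\|_{\bp(\ell_p^{rn})}\le\|u\|_{pcb}\|w\|$, the duality between $\ell_p^{rn}$ and $\ell_{p'}^{rn}$ gives $|\langle\alpha w\beta,u\rangle|\le\|\alpha\|_{p'}\|w\|\|\beta\|_p\|u\|_{pcb}$, and taking the infimum over factorizations yields the estimate; in particular $u\mapsto\langle\,\cdot\,,u\rangle$ is a contraction into $\mathcal{T}_n(V)'$.

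For surjectivity and the reverse inequality I would start from $\Phi\in\mathcal{T}_n(V)'$ and set $[u(x)]_{ij}=\Phi(x\otimes e_{ij})$, where $x\otimes e_{ij}$ denotes the element of $\mathbb{M}_n(V)$ with $x$ in the $(i,j)$ slot and zeros elsewhere. Expanding $v=\sum_{i,j}v_{ij}\otimes e_{ij}$ shows at once that $\langle v,u\rangle=\Phi(v)$, so $\Phi\mapsto u$ inverts $u\mapsto\langle\,\cdot\,,u\rangle$, and it remains to check $\|u\|_{pcb}\le\|\Phi\|$. For $w=[w_{kl}]\in M_r(V)$ with $\|w\|\le 1$ I would compute $\|u_r(w)\|_{\bp(\ell_p^{rn})}$ by testing against unit vectors $\hat\eta\in\ell_{p'}^{rn}$ and $\hat\xi\in\ell_p^{rn}$: running the same index identification backwards, $\hat\eta\,u_r(w)\,\hat\xi=\Phi(\alpha w\beta)$, where $\alpha\in\mathbb{M}_{n,r}$ and $\beta\in\mathbb{M}_{r,n}$ are read off from $\hat\eta$ and $\hat\xi$ and satisfy $\|\alpha\|_{p'}=\|\hat\eta\|_{\ell_{p'}^{rn}}\le 1$ and $\|\beta\|_p=\|\hat\xi\|_{\ell_p^{rn}}\le 1$. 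Hence $\|\alpha w\beta\|_{1,n}\le 1$, so $|\hat\eta\,u_r(w)\,\hat\xi|\le\|\Phi\|$, which gives $\|u_r\|\le\|\Phi\|$ for every $r$ and therefore $\|u\|_{pcb}\le\|\Phi\|$. Combined with the previous paragraph this gives $\|u\|_{pcb}=\|\Phi\|$.

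I expect the only genuinely delicate point to be the bookkeeping in these two index identifications: one must keep track of the fact that the left factor $\alpha$ is measured in the entrywise $\ell_{p'}$-norm while the right factor $\beta$ is measured in the entrywise $\ell_p$-norm, and that under $M_r(M_n)=\bp(\ell_p^{rn})$ these correspond exactly to the $\ell_{p'}^{rn}$- and $\ell_p^{rn}$-norms of the associated vectors. This asymmetry, absent when $p=2$, is precisely what makes $\|\cdot\|_{1,n}$ the correct predual norm; everything else is routine linear algebra together with the standard fact that the operator norm on $\ell_p^N$ is computed by pairing against the unit ball of $\ell_{p'}^N$.
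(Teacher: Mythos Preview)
Your proposal is correct and follows essentially the same route as the paper. Both arguments hinge on the same identity---that pairing $u_r(w)\in M_r(M_n)=\bp(\ell_p^{rn})$ against unit vectors in $\ell_{p'}^{rn}$ and $\ell_p^{rn}$ is the same as evaluating the scalar pairing $\langle\,\cdot\,,u\rangle$ at $\alpha w\beta$ with $\|\alpha\|_{p'},\|\beta\|_p\le 1$---and the paper simply packages the two inequalities you prove separately into a single chain of equalities computing $\|f\|$; you are a bit more explicit than the paper about surjectivity (constructing $u$ from $\Phi$), which the paper leaves implicit.
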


\begin{proof}
The second isometric isomorphism comes from the definition of the $p$-operator space structure on $V'$. We follow the idea as in \cite[\S 4.1]{EffrosRuan}. Let $f=[f_{ij}]\in M_n(V')=\cbp(V, M_n)$. Note that $$\|f\|=\sup\{\|\llangle f, \tilde{v} \rrangle \|: r\in \mathbb{N},~\tilde{v}=[\tilde{v}_{kl}]\in M_r(V),~\|\tilde{v}\|\leq 1 \}.$$
Let $D_{n\times r}^p$ denote the closed unit ball of $\ell_p^{n\times r}$, then
\begin{eqnarray*}
\|f\|&=&\sup\{\left|\left\langle \llangle f,\tilde{v}\rrangle \eta, \xi~\right\rangle\right|:r\in \mathbb{N},~\tilde{v}=[\tilde{v}_{kl}]\in M_r(V),~\|\tilde{v}\|\leq 1,~\eta\in D_{n\times r}^p,~\xi\in D_{n\times r}^{p'}\}\\
&=&\sup\left\{\left|\sum_{i,j,k,l} f_{ij}(\tilde{v}_{kl})\eta_{(j,l)}\xi_{(i,k)}\right|:r\in \mathbb{N},~\tilde{v}=[\tilde{v}_{kl}]\in M_r(V),~\|\tilde{v}\|\leq 1,~\eta\in D_{n\times r}^p,~\xi\in D_{n\times r}^{p'}\right\}\\
&=&\sup\left\{\left|\sum_{i,j=1}^n \left\langle f_{ij}, \sum_{k,l=1}^r \xi_{(i,k)}\tilde{v}_{kl}\eta_{(j,l)}\right\rangle\right|:r\in \mathbb{N},~\tilde{v}=[\tilde{v}_{kl}]\in M_r(V),~\|\tilde{v}\|\leq 1,~\eta\in D_{n\times r}^p,~\xi\in D_{n\times r}^{p'}\right\}.
\end{eqnarray*}
Note that $\sum_{k,l=1}^r \xi_{(i,k)}\tilde{v}_{kl}\eta_{(j,l)}$ is the $(i,j)$-entry of the matrix product $\alpha \tilde{v} \beta$, where
$$\alpha=\left[\begin{array}{ccc} \xi_{(1,1)} & \cdots & \xi_{(1,r)} \\ \vdots & \ddots & \vdots \\ \xi_{(n,1)} & \cdots & \xi_{(n,r)} \end{array}\right]\quad \text{and} \quad \beta=\left[\begin{array}{ccc} \eta_{(1,1)} & \cdots & \eta_{(n,1)} \\ \vdots & \ddots & \vdots \\ \beta_{(1,r)} & \cdots & \eta_{(n,r)} \end{array}\right],$$
so
\begin{eqnarray}\|f\|&=&\sup\left\{\left|\sum_{i,j=1}^n \left\langle f_{ij}, (\alpha\tilde{v}\beta)_{ij}\right\rangle\right|:\|\tilde{v}\|\leq 1,~\|\alpha\|_{p'}\leq 1,~\|\beta\|_p\leq 1\right\} \nonumber \\
&=&\sup\left\{\left|\left\langle f, v\right\rangle\right|:v=\alpha\tilde{v}\beta,~\|\tilde{v}\|\leq 1,~\|\alpha\|_{p'}\leq 1,~\|\beta\|_p\leq 1\right\} \nonumber \\
&=&\sup\left\{\left|\left\langle f, v\right\rangle\right|:\|v\|_{1,n}\leq 1\right\}. \label{f norm calculationi}
\end{eqnarray}
Define the scalar pairing $\Phi:M_n(V')\to \mathcal{T}_n(V)'$ by $f\mapsto \langle f,\cdot \rangle$, then from (\ref{f norm calculationi}) it follows that $\Phi$ is an isometric isomorphism.
\end{proof}

\begin{prop}\label{sufficient}
Let $V\subseteq W$ be $p$-operator spaces such that the inclusion $\mathcal{T}_n(V) \hookrightarrow \mathcal{T}_n(W)$ is isometric. Then every $p$-completely contractive map $\varphi:V\to \mathcal{B}(L_p(\Omega))$ has a completely contractive extension $\tilde{\varphi}:W\to \mathcal{B}(L_p(\Omega))$.
\end{prop}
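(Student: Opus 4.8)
The plan is to dualize the hypothesis and then to approximate $\bp(\lpw)$ by matrix algebras. By Lemma~\ref{TMdual}, $\cbp(V,M_n)$ is the Banach dual of $\mathcal{T}_n(V)$ and $\cbp(W,M_n)$ is the Banach dual of $\mathcal{T}_n(W)$, the duality in each case being the scalar pairing. The transpose of the isometric inclusion $\mathcal{T}_n(V)\hookrightarrow\mathcal{T}_n(W)$ is exactly the restriction map $\cbp(W,M_n)\to\cbp(V,M_n)$, and by the classical Hahn--Banach theorem the transpose of an isometric embedding carries the closed unit ball onto the closed unit ball. Hence the proposition already holds when the target is a matrix algebra: every $p$-completely contractive $\varphi_0\colon V\to M_n$ extends to a $p$-completely contractive $\tilde\varphi_0\colon W\to M_n$.

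To pass from $M_n$ to $\bp(\lpw)$ I would use the standard fact that $\lpw$ carries a net $(P_\lambda)$ of norm-one finite-rank projections (for instance conditional expectations onto sub-$\sigma$-algebras generated by finitely many sets of finite positive measure) whose ranges $E_\lambda$ are isometric to spaces $\ell_p^{n_\lambda}$ and such that $P_\lambda\xi\to\xi$ for every $\xi\in\lpw$. Because $E_\lambda$ is $1$-complemented in $\lpw$ and, being isometric to $\ell_p^{n_\lambda}$, has $\bp(E_\lambda)$ $p$-completely isometric to $M_{n_\lambda}$, the compression $c_\lambda\colon T\mapsto P_\lambda T|_{E_\lambda}$ is a $p$-complete contraction $\bp(\lpw)\to M_{n_\lambda}$ and the inflation $i_\lambda\colon S\mapsto \iota_\lambda S P_\lambda$ (with $\iota_\lambda\colon E_\lambda\hookrightarrow\lpw$) is a $p$-complete contraction $M_{n_\lambda}\to\bp(\lpw)$; both amplify to $\ell_p$-sums of the contractions $P_\lambda$ and $\iota_\lambda$, so the estimates are immediate. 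Put $\varphi_\lambda=c_\lambda\circ\varphi\colon V\to M_{n_\lambda}$, a $p$-complete contraction, extend it by the first paragraph to a $p$-complete contraction $\tilde\varphi_\lambda\colon W\to M_{n_\lambda}$, and set $\Psi_\lambda=i_\lambda\circ\tilde\varphi_\lambda\colon W\to\bp(\lpw)$. Then each $\Psi_\lambda$ is a $p$-complete contraction and, for $v\in V$, $\Psi_\lambda(v)=P_\lambda\varphi(v)P_\lambda$.

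Finally I would extract a cluster point of the net $(\Psi_\lambda)$. Since $1<p<\infty$, $\lpw$ is reflexive and $\bp(\lpw)=(\lpw\widehat{\otimes}\lpw')'$ is a dual Banach space, so for each $w\in W$ the net $(\Psi_\lambda(w))_\lambda$ lies in the weak-$*$ compact ball of radius $\|w\|$; by Tychonoff's theorem there is a subnet $(\Psi_{\lambda_\mu})$ along which $\Psi_{\lambda_\mu}(w)$ converges weak-$*$ to some $\tilde\varphi(w)\in\bp(\lpw)$ for every $w\in W$. Continuity of the vector-space operations for the weak-$*$ topology makes $\tilde\varphi$ linear. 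For $v\in V$ we have $P_{\lambda_\mu}\varphi(v)P_{\lambda_\mu}\to\varphi(v)$ in the strong operator topology — because $P_\lambda\to\mathrm{id}_{\lpw}$ strongly and the $P_\lambda$ are contractions — hence weak-$*$, so $\tilde\varphi(v)=\varphi(v)$ and $\tilde\varphi$ extends $\varphi$. For $p$-complete contractivity, fix $[w_{ij}]\in\mathbb{M}_n(W)$ with $\|[w_{ij}]\|\le1$; then $\|[\Psi_{\lambda_\mu}(w_{ij})]\|\le1$ in $M_n(\bp(\lpw))=\bp(\ell_p^n(\lpw))$ and, since each entry converges weak-$*$, so does the matrix in $\bp(\ell_p^n(\lpw))$ (again a space of operators on an $L_p$ space, hence a dual space), so weak-$*$ lower semicontinuity of the norm gives $\|[\tilde\varphi(w_{ij})]\|\le1$. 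Thus $\tilde\varphi\colon W\to\bp(\lpw)$ is the desired $p$-completely contractive extension.

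The main obstacle is not a single hard step but the bookkeeping in the approximation: one must secure the net $(P_\lambda)$ of norm-one projections with ranges \emph{exactly} isometric to $\ell_p^{n_\lambda}$ and converging strongly to the identity (reducing, if need be, to the semifinite or localizable case), and one must keep straight which topology is doing what — strong operator convergence of the compressions $P_\lambda\varphi(v)P_\lambda$ is what pins down $\tilde\varphi|_V$, while weak-$*$ convergence together with lower semicontinuity of the matrix norms is what delivers $p$-complete contractivity, and these must be deployed on the bounded sets where they are mutually compatible.
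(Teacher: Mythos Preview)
Your proof is correct and follows the same strategy as the paper: dualize the isometric inclusion via Lemma~\ref{TMdual} to settle the $M_n$ case, then reduce the general target $\bp(\lpw)$ to matrix algebras. The paper compresses the reduction step into a citation of Effros--Ruan (results stated there for $p=2$), whereas you spell out that reduction explicitly via finite-rank conditional expectations and a weak-$*$ compactness argument.
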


\begin{proof}
Following \cite[Corollay 4.1.4, Theorem 4.1.5]{EffrosRuan}, it suffices to assume that $\mathcal{B}(L_p(\Omega))=\mathcal{B}(\ell_p^n)=M_n$. If the inclusion $i:\mathcal{T}_n(V) \hookrightarrow \mathcal{T}_n(W)$ is isometric, then by Lemma \ref{TMdual}, the adjoint $i':\cbp(W,M_n)\to \cbp(V,M_n)$, which is a restriction mapping, is an exact quotient mapping.
\end{proof}

%%%%%%%%%%%%%%%%%%%%%%%%%%%%%%%%%%%%%%%%%

\section{$\ell_p$-polar decomposition}

Let $V\subseteq W$ be $p$-operator spaces. By Proposition \ref{sufficient}, if the inclusion $\mathcal{T}_n(V) \hookrightarrow \mathcal{T}_n(W)$ is isometric, then every $p$-completely contractive map $\varphi:V\to \mathcal{B}(L_p(\Omega))$ has a completely contractive extension $\tilde{\varphi}:W\to \mathcal{B}(L_p(\Omega))$. In this section, we consider a condition on $W$ under which the inclusion $\mathcal{T}_n(V) \hookrightarrow \mathcal{T}_n(W)$ is isometric. Recall that the vector $p$-norm of $x=(x_1,\ldots,x_n)\in \mathbb{C}^n$ is defined by
$$\|x\|_p=\left(\sum_{i=1}^n |x_i|^p\right)^{1/p}.$$
If we identify $\mathbb{M}_{r,n}$ with $\mathcal{B}(\ell_2^n,\ell_2^r)$, the space of all bounded linear operators from $\ell_2^n$ to $\ell_2^r$, it is well known that every $\beta\in \mathbb{M}_{r,n}$ with $r\geq n$ has a \textit{polar decomposition}, that is, $\beta$ can be written as $\beta=\tau\beta_0$, where $\tau\in \mathbb{M}_{r,n}$ has orthonormal columns, that is, $\tau$ is an isometry, and $\beta_0\in \mathbb{M}_{n}$ is positive semidefinite \cite[\S 7.3]{MatrixAnalysis}. For $p\neq 2$ and $r\geq n$, regarding $\mathbb{M}_{r,n}$ as $\mathcal{B}(\ell_p^n,\ell_p^r)$, the space of all bounded linear operators from $\ell_p^n$ to $\ell_p^r$, we ask if there is an $\ell_p$-analogue of the polar decomposition. First of all, we need to define what we should mean by polar decomposition when $p\neq 2$, because, for example, if $T:\ell_p^n\to \ell_p^n$, then the adjoint $T'$ is from $\ell_{p'}^n$ to $\ell_{p'}^n$, where $1/p+1/p'=1$, and therefore $T'T$ is not defined, which in turn means we lose the concept of positive (semi)definiteness. We use the definition below as a natural $p$-analogue of the polar decomposition. 

\begin{defi} \label{defi of ppolar} Let $r\geq n$. We say that $\beta\in \mathbb{M}_{r,n}=\mathcal{B}(\ell_p^n,\ell_p^r)$ is \textit{$\ell_p$-polar decomposible} if there is an isometry $\tau\in \mathbb{M}_{r,n}$ and an operator $\beta_0 \in \mathbb{M}_{n}$ such that $\beta=\tau\beta_0$. In this case, we say that $\beta=\tau\beta_0$ is an \textit{$\ell_p$-polar decomposition} of $\beta$. The set of all full rank $\ell_p$-polar decomposible $r\times n$ matrices is denoted by $\mathbb{M}_{r,n}^{(p)}$.
\end{defi}

\begin{remark}\quad \label{ppolarremark}
\begin{enumerate}
\item If $r<n$, then there is no isometry in $\mathbb{M}_{r,n}=\mathcal{B}(\ell_p^n,\ell_p^r)$ and hence we only consider the case $r\geq n$ in Definition \ref{defi of ppolar}.
\item It is well known \cite[\S 0.4]{MatrixAnalysis} that $\operatorname{rank} AB\leq \min\{\operatorname{rank} A, \operatorname{rank} B\}$ whenever $AB$ is defined for matrices $A$ and $B$, so if $\beta=\tau\beta_0$ is an $\ell_p$-polar decomposition of a full rank $r\times n$ matrix $\beta$, then
$$n=\operatorname{rank} \beta \leq \min\{\operatorname{rank}\tau, \operatorname{rank} \beta_0\}\leq n$$
and it follows that $\operatorname{rank} \tau= \operatorname{rank} \beta_0 =n$. In particular, $\beta_0$ is nonsingular.
\item If $\beta=\tau\beta_0$ is an $\ell_p$-polar decomposition of $\beta$, then $\|\beta\|_p=\|\beta_0\|_p$, where $\|\cdot\|_p$ is as in Lemma \ref{finite version operator norm and matrix p norm comparison}.
\end{enumerate}
\end{remark}

To give a characterization of $\ell_p$-polar decomposible matrices, we begin with a characterization of isometries from $\ell_p^n$ to $\ell_p^r$.  Recall that for a vector $x=(x_1,\ldots, x_m)$, we define $\supp x$, the \textit{support} of $x$, by $\supp x=\{i:1\leq i\leq m,\quad x_i\neq 0\}$. 

\begin{lemma}\label{first char} Let $1< p <\infty$, $p\neq 2$, and $r\geq n$. Then $\tau:\ell_p^n \to \ell_p^r$ is an isometry if and only if the columns of $\tau$ have mutually disjoint supports with each vector $p$-norm equal to $1$.
\end{lemma}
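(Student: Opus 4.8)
The plan is to prove both directions by a direct analysis of the equation $\|\tau x\|_p = \|x\|_p$ for all $x \in \ell_p^n$, exploiting the strict convexity of the $\ell_p$ norm when $p \neq 2$.

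\medskip

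\textbf{Sufficiency.} Suppose the columns $\tau_1, \dots, \tau_n$ of $\tau$ have mutually disjoint supports and $\|\tau_j\|_p = 1$ for each $j$. Then for $x = (x_1, \dots, x_n) \in \ell_p^n$ we have $\tau x = \sum_j x_j \tau_j$, and because the supports are disjoint the $p$-th power of the norm splits as a sum over columns: $\|\tau x\|_p^p = \sum_{j=1}^n |x_j|^p \|\tau_j\|_p^p = \sum_j |x_j|^p = \|x\|_p^p$. So $\tau$ is an isometry; this direction is routine and needs no hypothesis $p \neq 2$.

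\medskip

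\textbf{Necessity.} This is the substantive direction and where the hypothesis $p \neq 2$ is essential. Assume $\tau$ is an isometry. Applying $\|\tau x\|_p = \|x\|_p$ to each standard basis vector $e_j$ gives $\|\tau_j\|_p = 1$ immediately. For the disjointness of supports, I would argue by contradiction: suppose two columns, say $\tau_1$ and $\tau_2$, both have a nonzero entry in some row $k$. The idea is to test the isometry identity on vectors of the form $x = s e_1 + t e_2$ and compare $\|s\tau_1 + t\tau_2\|_p^p$ with $(|s|^p + |t|^p)$ as functions of $(s,t)$, using that equality must hold identically. Writing out $\|s\tau_1 + t\tau_2\|_p^p = \sum_k |s(\tau_1)_k + t(\tau_2)_k|^p$, the fact that this equals $|s|^p + |t|^p$ for all $s,t$ forces, term by term after grouping, a rigidity statement: the strict convexity of $u \mapsto |u|^p$ for $p \neq 2$ means that $|a+b|^p + |a-b|^p$ (or more generally the relevant combination) cannot reduce to the ``flat'' behavior unless one of $a,b$ vanishes. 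Concretely, I would restrict to real nonnegative $s,t$ and differentiate, or simply use the known characterization of equality in the triangle/Clarkson-type inequalities, to conclude that in each coordinate $k$ at most one of $(\tau_1)_k, (\tau_2)_k$ is nonzero — i.e. $\supp \tau_1 \cap \supp \tau_2 = \varnothing$. The same argument applies to any pair of columns.

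\medskip

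\textbf{Main obstacle.} The delicate point is extracting the coordinatewise disjointness from the global identity $\|\tau x\|_p^p = \|x\|_p^p$; one must rule out ``cancellation'' across different rows conspiring to reproduce $\|x\|_p^p$ without each row being supported by a single column. I expect to handle this cleanly via Clarkson's inequalities (or equivalently the uniform convexity of $L_p$ for $p \neq 2$): for $p > 2$, $\|f+g\|_p^p + \|f-g\|_p^p \leq 2^{p-1}(\|f\|_p^p + \|g\|_p^p)$ with equality iff $f$ and $g$ have disjoint supports, and dually for $1 < p < 2$. Setting $f = s\tau_1$, $g = t\tau_2$ and using the isometry property on $se_1 \pm te_2$ turns the isometry identity into the equality case of Clarkson's inequality, which immediately yields $\supp \tau_1 \cap \supp \tau_2 = \varnothing$. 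This is exactly the place where $p = 2$ is excluded, since then Clarkson's inequality becomes the parallelogram law, an identity with no rigidity. I would then remark that, having the disjoint supports and unit norms, the necessity conclusion follows. (Note that the full-rank/injectivity that is implicit in ``$r \geq n$, $\tau$ an isometry'' is automatic: an isometry is injective.)
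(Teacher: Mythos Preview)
Your approach is essentially the paper's: for sufficiency a direct computation, and for necessity test on $e_j\pm e_k$ to obtain $\|\tau_j+\tau_k\|_p^p=\|\tau_j-\tau_k\|_p^p=2$, then invoke a rigidity lemma for $p\neq 2$ (the paper cites Royden's text for exactly this step).

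One correction is needed, however. The Clarkson inequality you wrote down,
\[
\|f+g\|_p^p+\|f-g\|_p^p\leq 2^{p-1}\bigl(\|f\|_p^p+\|g\|_p^p\bigr)\qquad(p\geq 2),
\]
does \emph{not} have ``disjoint supports'' as its equality case: equality there occurs for $f=\pm g$ (check $f=g$ gives $2^p=2^p$, while disjoint unit vectors give $4<2^p$). The inequality you actually need is the \emph{other} Clarkson inequality,
\[
\|f+g\|_p^p+\|f-g\|_p^p\;\geq\;2\bigl(\|f\|_p^p+\|g\|_p^p\bigr)\quad(p\geq 2),\qquad
\|f+g\|_p^p+\|f-g\|_p^p\;\leq\;2\bigl(\|f\|_p^p+\|g\|_p^p\bigr)\quad(1<p\leq 2),
\]
whose equality case for $p\neq 2$ is precisely $fg=0$ pointwise. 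With $f=\tau_j$, $g=\tau_k$ the isometry gives $\|\tau_j+\tau_k\|_p^p+\|\tau_j-\tau_k\|_p^p=4=2(\|\tau_j\|_p^p+\|\tau_k\|_p^p)$, which lands you in this equality case and forces $\supp\tau_j\cap\supp\tau_k=\varnothing$. Once you swap in the correct inequality, your argument matches the paper's exactly; the extra parameters $s,t$ are not needed.
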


\begin{proof}
Let $\tau_j=\left[\begin{array}{c} \tau_{1j} \\ \vdots \\ \tau_{rj} \end{array}\right]$ denote the $j^\text{th}$ column of an $r\times n$ matrix $\tau$. If $\tau_1,\ldots,\tau_n$ have mutually disjoint supports with each $p$-norm equal to $1$, then for any $x=(x_1,\ldots, x_n)\in \ell_p^n$, we get
\begin{eqnarray*}
\|\tau x\|_p^p &=& \sum_{i=1}^r \left|\sum_{j=1}^n\tau_{ij}x_j\right|^p = \sum_{k=1}^n \sum_{i\in \supp \tau_k} \left|\sum_{j=1}^n\tau_{ij}x_j\right|^p \\
&=& \sum_{k=1}^n \sum_{i\in \supp \tau_k} \left|\tau_{ik}x_k\right|^p = \sum_{k=1}^n \left|x_k\right|^p \sum_{i\in \supp \tau_k} |\tau_{ik}|^p \\
&=& \|x\|_p^p.
\end{eqnarray*}
Conversely, suppose $\tau:\ell_p^n \to \ell_p^r$ is an isometry. Since $\tau_j=\tau e_j$ for each $j$, where $e_j$ denotes the unit vector in $\ell_p^n$ whose only non-zero entry is $1$ at the $j^\text{th}$ place, it follows that $\tau_j$ is of norm $1$. To show that columns of $\tau$ have mutually disjoint supports, let $j\neq k$ and consider $e_j\pm e_k$ in $\ell_p^n$. Since $\|e_j\pm e_k\|_p=2^{1/p}$, we get
$\|\tau_j\pm \tau_k\|_p^p=2$ and the result follows from \cite[Lemma 15.7.23]{Royden}.
\end{proof}

\begin{remark} The result above remains true when $p=1$.
\end{remark}

Let $V$ be a $p$-operator space. For $v\in \mathbb{M}_n(V)$, we define
\begin{equation} \label{conditionP}\|v\|_{2,n}=\inf\{\|\alpha\|_{p'}\|w\|\|\beta\|_p:r\in \mathbb{N},\quad v=\alpha w \beta,\quad \alpha^T\in \mathbb{M}_{r,n}^{(p')},\quad \beta\in \mathbb{M}_{r,n}^{(p)},\quad w\in M_r(V)\},\end{equation}
where $\alpha^T$ denotes the transpose of $\alpha$ and 
$$\|\alpha\|_{p'}=\left(\sum_{i=1}^n\sum_{j=1}^r |\alpha_{ij}|^{p'}\right)^{1/p'}\quad\text{and}\quad
\|\beta\|_{p}=\left(\sum_{k=1}^r\sum_{l=1}^n
|\beta_{kl}|^{p}\right)^{1/p}.$$ 

\begin{prop}
Let $V\subseteq W$ be $p$-operator spaces. If $\|w\|_{2,n}=\|w\|_{1,n}$ for all $w\in \mathbb{M}_n(W)$, then the inclusion $\mathcal{T}_n(V) \hookrightarrow \mathcal{T}_n(W)$ is isometric.
\end{prop}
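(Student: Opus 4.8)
The plan is to show the stated inclusion is isometric by proving that for every $v\in\mathbb{M}_n(V)$ the norm $\|v\|_{1,n}$ computed through factorizations with middle term in $M_r(V)$ agrees with the one computed through factorizations with middle term in $M_r(W)$; write these as $\|v\|_{1,n}^V$ and $\|v\|_{1,n}^W$, and similarly $\|v\|_{2,n}^W$ for the restricted factorizations of (\ref{conditionP}) taken through $M_r(W)$. Since every factorization $v=\alpha w\beta$ with $w\in M_r(V)$ is also one with $w\in M_r(W)$, the inequality $\|v\|_{1,n}^W\leq\|v\|_{1,n}^V$ is immediate, and the whole content lies in the reverse inequality, which is where the hypothesis $\|\cdot\|_{2,n}=\|\cdot\|_{1,n}$ on $\mathbb{M}_n(W)$ will enter.

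The key step is a reduction for the restricted norm. Suppose $v\in\mathbb{M}_n(V)$ admits a factorization $v=\alpha w\beta$ competing for $\|v\|_{2,n}^W$, so $w\in M_r(W)$, $\alpha^T\in\mathbb{M}_{r,n}^{(p')}$, and $\beta\in\mathbb{M}_{r,n}^{(p)}$. Using the $\ell_p$-polar decompositions of Definition \ref{defi of ppolar}, I would write $\beta=\tau\beta_0$ with $\tau$ an $\ell_p$-isometry and $\beta_0\in\mathbb{M}_n$, and $\alpha^T=\sigma\alpha_0$ with $\sigma$ an $\ell_{p'}$-isometry and $\alpha_0\in\mathbb{M}_n$, so that $\alpha=\alpha_0^T\sigma^T$ and
$$v=\alpha_0^T\,(\sigma^T w\tau)\,\beta_0.$$
Set $w'=\sigma^T w\tau\in M_n(W)$. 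Because $\alpha$ and $\beta$ have full rank, Remark \ref{ppolarremark}(b) makes $\alpha_0$ and $\beta_0$ nonsingular, whence $w'=(\alpha_0^T)^{-1}v\,\beta_0^{-1}$; as $v\in\mathbb{M}_n(V)$ and $(\alpha_0^T)^{-1},\beta_0^{-1}$ are scalar matrices, this forces $w'\in M_n(V)$. This is the crucial point: the polar constraint lets me contract the middle index back to $n$, and the invertibility of the scalar outer parts drags the new middle term from $W$ back into $V$.

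It remains to control the three norms in $v=\alpha_0^T w'\beta_0$, which is then a legitimate competitor for $\|v\|_{1,n}^V$ with middle index $r=n$. By Lemma \ref{first char} the columns of $\tau$ have disjoint supports and unit $\ell_p$-norm, so $\|\tau\|_{\mathcal{B}(\ell_p^n,\ell_p^r)}=1$; applying the same characterization with $p'$ together with the transpose duality $\|\sigma^T\|_{\mathcal{B}(\ell_p^r,\ell_p^n)}=\|\sigma\|_{\mathcal{B}(\ell_{p'}^n,\ell_{p'}^r)}=1$, the axiom $\mathcal{M}_p$ gives $\|w'\|\leq\|\sigma^T\|\,\|w\|\,\|\tau\|=\|w\|$. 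Finally Remark \ref{ppolarremark}(c) yields $\|\alpha_0^T\|_{p'}=\|\alpha\|_{p'}$ and $\|\beta_0\|_p=\|\beta\|_p$, so $\|\alpha_0^T\|_{p'}\|w'\|\|\beta_0\|_p\leq\|\alpha\|_{p'}\|w\|\|\beta\|_p$; taking the infimum over all such $\|\cdot\|_{2,n}^W$-factorizations gives $\|v\|_{1,n}^V\leq\|v\|_{2,n}^W$.

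Closing the loop is then immediate: invoking the hypothesis at $v\in\mathbb{M}_n(W)$ we have $\|v\|_{2,n}^W=\|v\|_{1,n}^W$, so the chain
$$\|v\|_{1,n}^W\leq\|v\|_{1,n}^V\leq\|v\|_{2,n}^W=\|v\|_{1,n}^W$$
collapses to equality, which is exactly the assertion that $\mathcal{T}_n(V)\hookrightarrow\mathcal{T}_n(W)$ is isometric. I expect the main obstacle to be the bookkeeping in the reduction: verifying both that the contracted middle term $w'$ genuinely returns to $M_n(V)$ (precisely where $v\in\mathbb{M}_n(V)$ and the nonsingularity coming from the full-rank hypothesis in Definition \ref{defi of ppolar} are used) and that passing to the $n\times n$ polar factors does not increase the product of norms, which rests on the isometry characterization of Lemma \ref{first char} and the transpose duality of operator norms.
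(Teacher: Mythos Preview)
Your proposal is correct and follows essentially the same route as the paper: use the $\ell_p$- and $\ell_{p'}$-polar decompositions to rewrite a $\|\cdot\|_{2,n}^W$-factorization $v=\alpha w\beta$ as $v=\alpha_0^T(\sigma^T w\tau)\beta_0$, observe that invertibility of $\alpha_0,\beta_0$ forces the contracted middle term into $M_n(V)$, and then compare norms. The only cosmetic difference is that the paper phrases the argument as ``$\|v\|_{\mathcal{T}_n(W)}<1\Rightarrow\|v\|_{\mathcal{T}_n(V)}<1$'' rather than your direct infimum chain, and it does not explicitly invoke Lemma~\ref{first char} for $\|\tau\|=\|\sigma^T\|=1$ (this already follows from $\tau,\sigma$ being isometries and transpose duality).
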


\begin{proof}
Let $v\in \mathbb{M}_n(V)$. It is clear that $\|v\|_{\mathcal{T}_n(W)}\leq \|v\|_{\mathcal{T}_n(V)}$. Suppose $\|v\|_{\mathcal{T}_n(W)}<1$, then by assumption, one can find $r\in \mathbb{N}$, $\alpha\in \mathbb{M}_{n,r}$, $\beta\in \mathbb{M}_{r,n}$, and $w\in M_r(W)$ such that $v=\alpha w \beta$, $\alpha^T\in \mathbb{M}_{r,n}^{(p')}$, $\beta\in \mathbb{M}_{r,n}^{(p)}$, $\|\alpha\|_{p'}<1$, $\|w\|<1$, and $\|\beta\|_p<1$. Let $\beta=\tau\beta_0$ (respectively, $\alpha^T=\sigma\alpha_0$) be $\ell_p$-(respectively, $\ell_p'$-) polar decomposition of $\beta$ (respectively, $\alpha^T$), and set $\tilde{w}=\sigma^T w \tau$, then $\|\tilde{w}\|_{M_n(W)}< 1$. Moreover, by Remark \ref{ppolarremark}, $\alpha_0$ and $\beta_0$ are invertible and hence $\tilde{w}=(\alpha_0^T)^{-1}v\beta_0^{-1}\in M_n(V)$, giving that $\|\tilde{w}\|_{M_n(V)}< 1$. Since $v=\alpha_0^T\tilde{w}\beta_0$, $\|\alpha_0^T\|_{p'}=\|\alpha\|_{p'}<1$, and $\|\beta_0\|_p=\|\beta\|_p<1$ by Remark \ref{ppolarremark}, it follows that $\|v\|_{\mathcal{T}_n(V)}<1$.
\end{proof}

For any $v\in \mathbb{M}_n(V)$, it is clear that $\|v\|_{1,n}\leq \|v\|_{2,n}$  At this moment of writing, we do not know of any nontrivial example of $p$-operator space $V$ such that $\|\cdot\|_{1,n}=\|\cdot\|_{2,n}$. It is not even clear whether $\|\cdot\|_{2,n}$ defines a norm on $\mathbb{M}_n(V)$ for some $p$-operator space $V$  (see Remark \ref{almostnorm}). However, thanks to Lemma \ref{first char}, we can give a characterization of $\ell_p$-polar decomposible matrices which may lead to finding a nontrivial example of $p$-operator spaces $V$ such that $\|v\|_{1,n}= \|v\|_{2,n}$ for all $v\in \mathbb{M}_n(V)$.

\begin{prop} \label{chardecom}
Let $1< p <\infty$, $p\neq 2$, and $r\geq n$. Then $\beta=\left[\begin{array}{c} \rule[1pt]{1em}{1pt}\quad u_1 \quad\rule[1pt]{1em}{1pt} \\ \vdots \\ \rule[1pt]{1em}{1pt}\quad u_r\quad\rule[1pt]{1em}{1pt}  \end{array}\right]\in \mathbb{M}_{r,n}=\mathcal{B}(\ell_p^n,\ell_p^r)$ is $\ell_p$-polar decomposible  if and only if there are $u_{j_1},u_{j_2},\ldots,u_{j_n}$, not necessarily distinct, such that each $u_i$ $(1\leq i\leq r)$ is a scalar multiple of $u_{j_k}$ for some $k$, $1\leq k\leq n$.
\end{prop}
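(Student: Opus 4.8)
The plan is to derive both implications from one structural fact supplied by Lemma~\ref{first char}: the columns of an isometry $\tau\in\mathbb{M}_{r,n}=\mathcal{B}(\ell_p^n,\ell_p^r)$ have mutually disjoint supports, so \emph{each row of $\tau$ has at most one nonzero entry}. For the forward implication, suppose $\beta=\tau\beta_0$ is an $\ell_p$-polar decomposition and write $v_1,\dots,v_n$ for the rows of $\beta_0$. The fact above shows that the $i$-th row $u_i$ of $\beta$ equals $\tau_{i,k}v_k$, where $k=k(i)$ is the unique column in which the $i$-th row of $\tau$ is nonzero (and $u_i=0$ if that row of $\tau$ vanishes); in particular each $u_i$ is a scalar multiple of some $v_k$. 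To replace the $v_k$ by genuine rows of $\beta$, note that each column of $\tau$ has $p$-norm $1$ and so is nonzero: for each $k$ choose $j_k$ with $\tau_{j_k,k}\neq 0$; disjointness of supports then forces the $j_k$-th row of $\tau$ to be $\tau_{j_k,k}$ times the $k$-th coordinate vector, so that $u_{j_k}=\tau_{j_k,k}v_k$, whence $v_k$, and therefore every $u_i$, is a scalar multiple of $u_{j_k}$.

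For the converse I will rebuild $\tau$ and $\beta_0$ by clustering the rows of $\beta$. Since each nonzero $u_i$ is a \emph{nonzero} scalar multiple of some $u_{j_k}$, the nonzero rows of $\beta$ lie in at most $n$ distinct one-dimensional subspaces; denote the number of these subspaces by $d$, so $d\le n$. The goal is to partition $\{1,\dots,r\}$ into $n$ pairwise disjoint nonempty blocks $P_1,\dots,P_n$, each ``monochromatic'' --- that is, consisting either only of indices of zero rows or only of indices of mutually proportional nonzero rows --- and such that every nonzero-row index lies in some block; leftover indices, which are necessarily indices of zero rows, will become the zero rows of $\tau$. Given such a partition, set the $m$-th row of $\beta_0$ equal to a fixed nonzero row proportional to the rows indexed by $P_m$ (and to $0$ when $P_m$ contains only zero rows), let the $m$-th column of $\tau$ be supported on $P_m$ with entries the corresponding proportionality constants (all-ones entries, say, in the zero case), and then rescale each column of $\tau$ to have $p$-norm $1$, absorbing the scaling factor into the matching row of $\beta_0$. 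By construction $\tau$ has disjoint column supports and unit-norm columns, hence is an isometry by Lemma~\ref{first char}, and $\tau\beta_0=\beta$.

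The step I expect to be the main obstacle is exhibiting the monochromatic partition into \emph{exactly} $n$ nonempty blocks. At least $d$ blocks are unavoidable (one per direction among the nonzero rows), and a configuration can be refined one block at a time, either by splitting some block of size $\ge 2$ or by adjoining, as a fresh singleton block, a zero-row index not yet used; a short count relying only on $d\le n\le r$ then shows one can interpolate to reach exactly $n$ nonempty blocks. The degenerate case $\beta=0$ (where $d=0$) is handled separately: then $\beta=\tau\cdot 0$ for any isometry $\tau\in\mathbb{M}_{r,n}$, and such a $\tau$ exists precisely because $r\ge n$.
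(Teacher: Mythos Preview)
Your argument is correct and follows the same blueprint as the paper: both directions hinge on Lemma~\ref{first char}, which says that the columns of an $\ell_p$-isometry have disjoint supports (equivalently, each row of $\tau$ has at most one nonzero entry). Your forward implication is essentially identical to the paper's: pick $j_k\in\supp\tau_k$ and observe that $u_{j_k}$ is a nonzero multiple of the $k$-th row of $\beta_0$, so every $u_i$ is a multiple of some $u_{j_k}$.

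For the converse, the paper proceeds more concretely: it permutes the rows so that $1=j_1<j_2<\cdots<j_n\le r$ and then writes down $\tau$ and $\beta_0$ explicitly, with the $k$-th column of $\tau$ supported on the block $[j_k,j_{k+1})$. Your clustering-and-refinement argument is morally the same construction but is more careful on one point the paper glosses over: the hypothesis allows the $u_{j_k}$ to repeat, so the number $d$ of genuinely distinct directions among the nonzero rows may be strictly less than $n$, and one must explain why exactly $n$ nonempty monochromatic blocks can still be found. Your interpolation argument (split a block of size $\ge 2$, or adjoin a zero-row singleton, using $d\le n\le r$) fills this gap cleanly; the paper simply asserts the existence of $n$ distinct strictly increasing indices after rearrangement.

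One small wording issue: you say you ``partition $\{1,\dots,r\}$'' into $n$ blocks but then speak of ``leftover indices''. What you actually want is $n$ pairwise disjoint nonempty monochromatic subsets whose union contains all nonzero-row indices; the remaining (necessarily zero-row) indices give the zero rows of $\tau$. With that adjustment your construction goes through as written.
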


\begin{proof}
Let $\beta=\left[\begin{array}{c} \rule[1pt]{1em}{1pt}\quad u_1 \quad\rule[1pt]{1em}{1pt} \\ \vdots \\ \rule[1pt]{1em}{1pt}\quad u_r \quad\rule[1pt]{1em}{1pt} \end{array}\right]\in \mathbb{M}_{r,n}=\mathcal{B}(\ell_p^n,\ell_p^r)$.  Suppose that there are $u_{j_1},u_{j_2},\ldots,u_{j_n}$ (not necessarily distinct) such that each $u_i$ $(1\leq i\leq r)$ is a scalar multiple of $u_{j_k}$ for some $k$, $1\leq k\leq n$. Rearranging rows of $\beta$ with an appropriate permutation if necessary, we may assume that $1= j_1<j_2<j_3<\cdots<j_n\leq r$ and that for $i$ with $j_k\leq i <j_{k+1}$, $u_i=c_{i} u_{j_k}$ for some scalar $c_{i}$. For each $k$, $1\leq k \leq n$, we define 
$\lambda_k=\left(\sum_{j_k\leq i < j_{k+1}}|c_{i}|^p\right)^{-p}.$ 
Note that $\lambda_k$ is well defined since $c_{j_k}=1$. Define $\tau \in \mathbb{M}_{r,n}$ and $\beta_0 \in \mathbb{M}_{n}$ by
$$\tau=\left[\begin{array}{ccccc} c_{1}\lambda_1 & 0 & 0 & \cdots & 0 \\ c_2\lambda_1 & 0 & 0 & \cdots & 0 \\
 \vdots&\vdots&\vdots&\ddots&\vdots \\
 c_{j_2-1}\lambda_1 & 0 & 0 & \cdots & 0 \\
 0 & c_{j_2}\lambda_2 & 0 & \cdots & 0 \\ 
  0 & c_{j_2+1}\lambda_2 & 0 & \cdots & 0 \\ 
   \vdots & \vdots & \vdots & \ddots & \vdots \\ 
   0 & c_{j_3-1}\lambda_2 & 0 & \cdots & 0 \\ 
      \vdots & \vdots & \vdots & \ddots & \vdots \\ 
   0 & 0 & 0 & \cdots & c_{j_n}\lambda_n \\ 
   0 & 0 & 0 & \cdots & c_{j_n+1}\lambda_n \\ 
     \vdots & \vdots & \vdots & \ddots & \vdots \\ 
      0 & 0 & 0 & \cdots & c_{r}\lambda_n 
\end{array}\right]\quad\text{and}\quad\beta_0=\left[\begin{array}{c} \rule[1pt]{1em}{1pt}\quad \frac{1}{\lambda_1}u_{j_1} \quad\rule[1pt]{1em}{1pt} \\  \rule[1pt]{1em}{1pt}\quad  \frac{1}{\lambda_2}u_{j_2} \quad\rule[1pt]{1em}{1pt} \\ \vdots \\ \rule[1pt]{1em}{1pt}\quad  \frac{1}{\lambda_n}u_{j_n} \quad\rule[1pt]{1em}{1pt} \end{array}\right],$$
then by Lemma \ref{first char}, it follows that $\beta=\tau\beta_0$ is an $\ell_p$-polar decomposition of $\beta$.\\
Conversely, assume that $\beta=\tau\beta_0$ is a $p$-polar decomposition of $\beta$. To exclude triviality, we may assume that $\beta$ contains no rows of only zeros.  Let $\tau_k$ denote the $k^\text{th}$ column of $\tau$. By Lemma \ref{first char}, $\supp \tau_k\neq \emptyset$ so we can pick $j_k\in \supp \tau_k$. Moreover, for each $i$, $1\leq i \leq r$, there is exactly one $k(i)$ such that $i\in \supp \tau_{k(i)}$ and it follows that $u_i$ is a constant multiple of $u_{j_{k(i)}}$. \end{proof}

\begin{remark} \label{almostnorm}
Let $v_1\in \mathbb{M}_n(V)$ and $v_2\in \mathbb{M}_m(V)$ for some $p$-operator space $V$, then one can easily show that $\|cv_1\|_{2,n}=|c|\|v_1\|_{2,n}$. Moreover, the decomposition $v_1=\alpha_1^T w_1 \beta_1$ and $v_2=\alpha_2^T w_2 \beta_2$ gives 
\begin{equation} \left[\begin{array}{cc} v_1 & \\ & v_2\end{array}\right]=\left[\begin{array}{cc} \alpha_1^T & \\ & \alpha_2^T\end{array}\right]\left[\begin{array}{cc}w_1 & \\ & w_2\end{array}\right]\left[\begin{array}{cc} \beta_1 & \\ & \beta_2\end{array}\right],\end{equation}
which, combined with Proposition \ref{chardecom}, shows that $\|v_1\oplus v_2\|_{2,n+m}\leq \|v_1\|_{2,n}+\|v_2\|_{2,m}$. 
\end{remark}

\vspace{1cm}
\bibliography{jungjinthesisbib}
\bibliographystyle{alpha}

\end{document}